\newcommand{\colim}{{\operatorname{colim}}}
\newcommand{\Gr}{{\operatorname{Gr}}}
\newcommand{\Hom}{\operatorname{Hom}}
\newcommand{\End}{\operatorname{End}}
\newcommand{\CH}{\operatorname{CH}}
\newcommand{\Pic}{\operatorname{Pic}}
\newcommand{\Spec}{\operatorname{Spec}}
\newcommand{\C}{\mathbf{C}}
\renewcommand{\L}{\mathbf{L}}
\renewcommand{\P}{\mathbf{P}}
\newcommand{\Q}{\mathbf{Q}}
\newcommand{\Z}{\mathbf{Z}}
\newcommand{\un}{\mathbf{1}}
 \newcommand{\sK}{\mathcal{K}}
 \newcommand{\sC}{\mathcal{C}}
\newcommand{\sM}{\mathcal{M}}
\newcommand{\sMrat}{\sM}
\newcommand{\sO}{\mathcal{O}}
\newcommand{\sJ}{\mathcal{J}}
\newcommand{\sI}{\mathcal{I}}
\newcommand{\sX}{\mathcal{X}}
 \newcommand{\sZ}{\mathcal{Z}}
\newcommand{\sV}{\mathcal{V}}
 \numberwithin{equation}{section}
\theoremstyle{plain}
\newtheorem{thm}[equation]{Theorem}
\newtheorem{prop}[equation]{Proposition}
\newtheorem{lm}[equation]{Lemma}
\newtheorem{cor}[equation]{Corollary}
\newtheorem{conj}[equation]{Conjecture}
\newtheorem{ques}[equation]{Question}
\theoremstyle{definition}
\newtheorem{defn}[equation]{Definition}
\newtheorem{ex}[equation]{Example}
\newtheorem{rk}[equation]{Remark}
\begin{document}

\title[\ ] 
{  Bloch's conjecture and valences of correspondences for    K3 surfaces}

 \author   { CLAUDIO PEDRINI}  \bigskip

  \date{\today}
\maketitle

 \begin{abstract}  Bloch's conjecture for a surface $X$ over an   algebraically closed field $k$ states that every  homologically trivial correspondence  $\Gamma $ acts as 0 on the Albanese kernel $T(X_{\Omega})$, where $\Omega $ is a universal domain containing $k$. Here we prove that, for a complex K3 surface $X$, Bloch's conjecture is equivalent to the existence of a valence for every correspondence. We also  give applications of this result to the case of a correspondence associated to an  automorphisms of finite order  and to the existence of constant cycle curves on $X$. Finally we show that Franchetta's conjecture, as stated by K.O'Grady, holds true for the family of polarized K3 surfacees of genus $g$, if $ 3 \le g \le 6$ 
\end{abstract} 
 
\section {introduction} 
 
 The existence of a suitable filtration for the Chow ring of every smooth projective variety over a field $k$, as conjectured by Bloch and Beilinson (see [MNP,Ch.7]), has many important consequences both in arithmetic and in geometry.  Apart from the trivial case of curves and some other particular cases,  this conjecture is still wide open.  Jannsen [Jan, 2.1] has shown that the existence of a Bloch -Beilinson filtration  for every smooth projective variety is equivalent  to Murre's conjectures on the existence of a suitable  Chow-K\"unneth decomposition of the   motives in $\sM_{rat}(k)$.\par
\noindent   Here $\sM_{rat}(k)$ denotes the (covariant ) category of Chow motives  with rational coefficients over the field $k$, which is  is a  $\Q$-linear, pseudoabelian, tensor category. If $X$ is a smooth irreducible , projective variety of dimension $d$  the motive of $X$ is $(X,\Delta_X)$. If $X$ and $Y$ are smooth projective varieties, then $\Hom_{\sM_{rat}}(h(X),h(Y))=A^d(X \times Y)$, where
  $A^*(X \times Y ) =CH^*(X\times Y)\otimes \Q$.  If $f : X \to Y$ is a morphism, then the correspondence $\Gamma_f \in A^2(X \times Y)$ is an element of $\Hom_{\sM_{rat}}(h(X),h(Y))$. \par 
\noindent   A  smooth irreducible projective surface (over any field  $k$) has  a refined Chow-K\"unneth decomposition  $\sum_{0 \le i \le 4}h_i(X)$
where $h_0(X) \simeq \un$, $h_4(X) \simeq \L^2$,  $h_2(X) =h^{alg}_2(X) +t_2(X)$ and $t_2(X)= (X,\pi^{tr}_2)$, see [KMP,2.2]. Here  
$$ \pi^{alg}_2(X)= \sum_{1 \le h \le \rho} \frac {[D_h \times D_h]}{D^2_h}. $$
\noindent with $\{D_h\}$ an orthogonal basis of $NS(X) \otimes \Q$ and
$\rho =\rho(X)$   the rank of $NS(X)$. Therefore 
$(X,\pi^{alg}_2)\simeq\mathbf{L}^{\oplus \rho}$. We also have
\begin{align*}
A_i(t_2(X))=& \pi^{tr}_2 A_i(X)=0    \  for  \    i \ne 0   \  ; \\
A_0(t_2(X))=&\Hom_{\sMrat}(\un,t_2(X)) \simeq T(X) 
\end{align*}  
\noindent where $T(X)$ is the Albanese kernel.The transcendental motive $t_2(X)$ is independent of the construction of the
Chow-K\"unneth decomposition, it is functorial for the action of
correspondences and a birational invariant for $X$.The motive $h(X)$ is finite dimensional, in the sense of Kimura iff $t_2(X)$ is evenly finite dimensional,i.e. $\wedge^n t_2(X) =0$, for some $n >0$, see \cite[Thm. 7.6.12]{KMP}. 
\noindent    A consequence of the Bloch-Beilinson's conjectures, or equivalently of Murre's conjectures , is the following Bloch's conjecture for surfaces.\par
\begin{conj} Let $X$ be a smooth projective surface over  a field $k$ of characteristic 0 .  Let $\Gamma \in A^2(X \times X)_{hom}$ be a  homologically trivial correspondence   Then   $\Gamma$ acts as 0 on the Albanese kernel $T(X_{\Omega})$, where $\Omega $ is a universal domain containing $k$
\end{conj}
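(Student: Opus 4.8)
The plan is to reduce the conjecture to a statement about the action of $\Gamma$ on the transcendental motive $t_2(X)$ and then to bring in finite dimensionality in the sense of Kimura. By the refined Chow--K\"unneth decomposition recalled above, the action of a correspondence $\Gamma$ on $T(X_\Omega)\simeq A_0(t_2(X_\Omega))$ factors through its transcendental component $\bar\Gamma:=\pi^{tr}_2\circ\Gamma\circ\pi^{tr}_2\in\End_{\sMrat}(t_2(X))$, since $A_i(t_2(X))=0$ for $i\ne 0$ and $A_0(t_2(X))\simeq T(X)$. Hence proving that $\Gamma$ acts as $0$ on $T(X_\Omega)$ is equivalent to proving that $\bar\Gamma$ acts as $0$ on $A_0(t_2(X_\Omega))$, and the whole problem is transferred to the single summand $t_2(X)$.

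First I would observe that homological triviality of $\Gamma$ forces $\bar\Gamma$ to act as $0$ on the realization $H(t_2(X))=H^2_{tr}(X)$, so that $\bar\Gamma$ is a homologically, and therefore numerically, trivial endomorphism of $t_2(X)$. Second, assuming $t_2(X)$ to be evenly finite dimensional, i.e. $\wedge^n t_2(X)=0$ for some $n>0$ (equivalently $h(X)$ finite dimensional, by [KMP, Thm. 7.6.12]), I would invoke the nilpotence theorem: a numerically trivial endomorphism of a finite dimensional motive is nilpotent. This yields $\bar\Gamma^{\circ n}=0$ in $\End_{\sMrat}(t_2(X))$ for some $n$, and in particular $(\bar\Gamma_*)^n=0$ on $T(X_\Omega)$.

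The main obstacle is to pass from nilpotence to outright vanishing. Finite dimensionality only delivers $\bar\Gamma^{\circ n}=0$, whereas Bloch's conjecture requires $\bar\Gamma_*=0$ already at the first power; closing this gap is exactly where an additional input is needed, namely control of the valence of $\Gamma$, which should promote the merely nilpotent action on $T(X_\Omega)$ to the zero action. A second, equally serious difficulty is that finite dimensionality of $t_2(X)$ is itself unknown for a general surface --- and indeed for a general complex K3 surface --- so that any unconditional treatment must either restrict to cases where finite dimensionality is available or replace it altogether by the valence condition. For this reason I expect the honest output of this circle of ideas to be not a direct proof of the conjecture, but an equivalence reducing Bloch's conjecture for $X$ to the existence of a valence for every correspondence on $X$.
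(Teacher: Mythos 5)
The statement you were asked about is Conjecture 1.1, and the paper offers no proof of it: it is stated as an open conjecture, and the paper's actual contribution (Lemma 2.7 and Theorem 2.9) is the equivalence, for a complex K3 surface, between Bloch's conjecture and the existence of a valence for every correspondence. Your proposal correctly refuses to manufacture a proof and instead diagnoses why the standard machinery falls short; that honest conclusion is exactly where the paper lands. Your reduction of the action on $T(X_\Omega)$ to the endomorphism $\bar\Gamma=\pi^{tr}_2\circ\Gamma\circ\pi^{tr}_2$ of $t_2(X)$, and the observation that homological triviality plus Kimura finite dimensionality yields only nilpotence of $\bar\Gamma$ (cf.\ the paper's Remark 2.8), are both sound and consistent with the framework of Section 2.

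One refinement worth noting: in the direction ``valences for all correspondences $\Rightarrow$ Bloch,'' the paper does not promote nilpotence to vanishing. Instead, Lemma 2.7 argues directly: if $\Gamma$ is homologically trivial and $\Gamma+v(\Gamma)\Delta_X\in\sI(X)$, then comparing classes in the $H^2_{tr}(X)\otimes H^2_{tr}(X)$ component of $H^4(X\times X)$ --- where degenerate correspondences vanish and $cl(\Delta_X)\ne 0$ because $p_g(X)\ne 0$ --- forces $v(\Gamma)=0$, hence $\Gamma\in\sI(X)$, and a degenerate correspondence visibly kills the Albanese kernel when $q(X)=0$. So the valence hypothesis does not interact with nilpotence at all; it replaces the finite-dimensionality input entirely. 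The converse direction (Bloch $\Rightarrow$ valences, Theorem 2.9) uses the eigenvalue of $\Gamma^*$ on $H^{2,0}(X)$ to produce the candidate valence $-\alpha_\Gamma$, an ingredient absent from your sketch but essential to the equivalence you anticipate.
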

 If $p_g(X) =q(X)=0$ then Bloch's conjecture implies $A_0(X)_0=0$. In this form  Bloch's conjecture   is known to hold for all complex surfaces not of general type and for some classes of surfaces of general type, see [PW 2]. From the results of Bloch and Srinivas in  [B-S], it follows that Bloch's conjecture for a complex surface  $X$, with $p_g(X)=q(X)=0$, is equivalent to the class $\Delta_X$ of the diagonal in $A^2(X \times X)$ having valence $v(\Delta_X)=0$, see [PW 1,  4.1].  Here , if $\Gamma$ is a correspondence in $ A^2(X \times X)$, $\Gamma$ has a valence  
$v(\Gamma)$ iff $\Gamma +v(\Gamma) \Delta_X \in \sI(X) $, where $\sI(X) \subset A^2(X \times X)$ denotes the ideal of degenerate correspondences, see [Fu, 16.1.5]. If $p \in A^2(X \times X)$ is a projector which has a valence, then $v(p)$ is either 0 or -1. Since $\Delta_X$ has always valence  -1  if $v(\Delta_X)=0$  then $\Delta_X$ has two different valences. Also, if $v(\Delta_X)=0$, then rational, algebraic, homological and numerical equivalence coincide in $A^*(X)$, see [Vois 4]. On the other hand,  if $p_g(X) \ne 0$,  then the valence of a correspondence is either unique or undefined.\par
\noindent Let $X$ be a smooth projective surface with $p_g(X) \ne 0$. A natural  question to ask is to find a relation  between conjecture 1.1 and the existence of a valence for every correspondence in $A^2(X \times X)$.\par
 \noindent In Sect. 2 we prove that  a complex K3 surface $X$ satisfies Bloch's conjecture 1.1 iff  every correspondence in $A^2(X \times X)$ has a valence(see Theorem 2.9).\par
 \noindent  In Sect.3 we consider the case of a K3 surface $X$ over $\C$  with a finite group $G$ of  automorphisms of order $n$. D.Huybrects in [Huy 1] proved that a symplectic automorphism of finite order acts as the identity on $A_0(X)$. Here  (see Theorem 3.3) we show  that,  for any finite group of automorphisms $G$ on a K3 surface,  the projector  $p =(1/n)\sum _{g \in G} \Gamma_g$ has a  valence. More precisely $v(p) =-1 $ if  $G$ consists of  symplectic automorphisms and  $v(p) = 0$ if the automorphisms are non-symplectic. Then we apply these results to compute the virtual number of coincidence for the correspondence $p$ (Corollary 3.5).\par 
\noindent Huybrechts in [Huy 2]  has introduced the notion of a constant cycle curve  on   surface $X$. In Sect. 4 we prove a motivic characterization of a constant cycle curve (Theorem 4.6) which implies a 1-1 correspondence between constant cycle curves on a K3 surface $X$, fixed by a symplectic automorphism $g$, and  constant cycle curves on the K3 surface   $Y$ obtained as a minimal desingularization of $X/g$, see Corollary 4.7. We also  prove (Theorem 4.9) that   every curve of fixed points in a correspondence which has a valence $\ne -1$ is a constant cycle curve. Examples of constant cycle curves on K3 surfaces  obtained in this ways are given in Ex 4.8 and 4.10.\par
  \noindent Finally, in Section 5, we consider the case of a smooth projective family $f : \sX \to S$  of K3 surfaces over a  smooth base $S$, a  case which is  related to the so called  generalized Franchetta's conjecture (see [O'Gr, 5.3]), where $\sX$ is the universal family of   K3 surfaces with a polarization of degree $2g-2 $ and trivial automorphism group. In Theorem 5.6  we prove that Franchetta's conjecture is equivalent to $A_0(X_{\eta})\simeq \Q$, where $X_{\eta}$ is the generic fibre of $f$. Using this result we show (see Corollary 5.7 ) that the conjecture holds true if$g=3,4,5$, in which cases the projective model in $\P^g$ of a general polarized K3 surface of genus $g$ is a complete intersection projective of $g-2$ hyper surfaces and for $g=6$(see Corollary 5.9) \par
We thank C.Weibel and K.O'Grady for useful comments on a preliminary version of this paper. 
  \section {Valences of correspondences on K3 surfaces }
If $X$ is a K3 surface over  $\C$, then it   has  a  refined Chow-K\"unneth decomposition   
$h(X)=\sum_{0 \le i\le 4}h_i(X)$ with $h_1(X)=h_3(X)=0$, because $X$ has no odd cohomology. Also  $h_2(X) =h^{alg}_2(X) +t_2(X)$, with  $\pi_2 =\pi^{alg}_2 +\pi^{tr}_2$,  $t_2(X)= (X,\pi^{tr}_2)$ and $h^{alg}_2 =(X,\pi^{alg}_2)\simeq \mathbf{L}^{\oplus \rho(X)}$. 
Therefore
\begin {equation}h(X)= \un \oplus \mathbf L^{\oplus \rho} \oplus  t_2(X) \oplus \mathbf L^2 \end{equation}
where $ \rho= \rho(X)$  is  the  rank of $NS(X)_{\Q} = (Pic X)_{\Q}$, so that  $1 \le  \rho(X)  \le 20$. We also have  
$$H^i(t_2(X))=0  \   for \   i \ne 2    \  ;   \  H^2(t_2(X)) =\pi^{tr}_2 H^2(X,\Q) =H^2_{tr}(X,\Q),$$  
 $$ dim H^2(X, \Q)) = b_2(X) =22  \ ; \  dim H^2_{tr}(X,\Q)) = 22  - \rho(X),  $$
$$A_i(t_2(X))=\pi^{tr}_2 A_i(X)=0    \  for  \    i \ne 0  \  ; \  A_0(t_2(X)) =A_0(X)_0$$  
Here $T(X) =A_0(X)_0$ because  $q(X)=0$ . We also have 
 $$ \Hom_{\sM_{rat}}(\un,t_2(X))\simeq A_0(X)_0$$
 If $X$ and $Y$ are surfaces there is a map, see \cite[7.4]{KMP}
 $$ \Psi_{X,Y} :   A^2(X \times Y) \to  \Hom_{\sM_{rat}}(t_2(X), t_2(Y))$$ 
sending $\Gamma$ to $\pi^{tr}_2(Y) \circ\Gamma \circ \pi^{tr}_2(X)$, satisfying the following functorial relation 
$$\Psi_{X,Z} (\Gamma' \circ \Gamma) = \Psi_{Y,Z}(\Gamma')\circ \Psi_{X,Y}(\Gamma),$$ 
where $X,Y,Z$ are smooth projective surfaces,  $\Gamma \in A^2(X \times Y)$ and $\Gamma' \in A^2(Y \times Z)$. In the case $X=Y$ the map $\Psi_X =\Psi_{X,X}$  yields an isomorphism of rings
 \begin {equation}\Psi_X: A^2(X \times X)/\sJ(X) \to  End_{\sM_{rat}}(t_2(X))\end{equation}
 where $\sJ(X)$ is the ideal of $A^2(X \times X)$
generated by the classes of correspondences which are not dominant
over $X$ by either the first or the second projection (see \cite[7.4.3]{KMP}). The following result shows that, if  $q(X)=0$,  then  $\sJ(X)$ coincides with the ideal $\sI(X)$ of degenerate correspondences.
\begin{lm}\label{I(X)=J(X)}
Let $X$ be a smooth projective surface with $q(X)=0$.
Then  $\sI(X)=\sJ(X)$ in $A^2(X \times X)$.
\end{lm}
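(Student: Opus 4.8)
The plan is to prove the two inclusions separately; the inclusion $\sI(X)\subseteq\sJ(X)$ is formal and the reverse one carries all the content. Recall that the degenerate correspondences of [Fu, 16.1.5] are the classes lying in the subgroup $\sI(X)$ generated by the exterior products $Z_1\times Z_2$ of cycles on $X$; in codimension $2$ these generators split, along the three K\"unneth bidegrees, into the vertical classes $[\{x\}\times X]$, the horizontal classes $[X\times\{x\}]$ and the products $[D\times D']$ of two divisors. Each such generator fails to dominate $X$ by one of the two projections (for instance $p_1(D\times D')=D\neq X$), so it belongs to $\sJ(X)$; since $\sJ(X)$ is a subgroup containing all these generators, it contains the subgroup they generate, giving $\sI(X)\subseteq\sJ(X)$. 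This step uses nothing about $q(X)$.

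For the reverse inclusion I would first note that $\sI(X)$ is itself an ideal: composing an exterior product $B_1\times B_2$ with any correspondence again yields a class supported on a product $W\times B_2$, so the decomposable classes are closed under $\circ$. Hence it suffices to show that every irreducible generator $[V]$ of $\sJ(X)$ lies in $\sI(X)$. Let $V\subset X\times X$ be irreducible and not dominant over $X$ by, say, $p_1$, and set $Z=p_1(V)\subsetneq X$. If $\dim Z=0$ then $V=\{x\}\times X$ is already a generator of $\sI(X)$. The essential case is $\dim Z=1$: writing $C=Z$ and $\nu\colon\tilde C\to C\hookrightarrow X$ for the normalization, I would lift $V$ to an irreducible divisor $\tilde V$ on the smooth threefold $\tilde C\times X$ with $(\nu\times\mathrm{id}_X)_*[\tilde V]=[V]$.

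The key step is the structure of $\Pic(\tilde C\times X)$. By the theorem of the cube and the description of the Picard group of a product there is a decomposition
\begin{equation}
\Pic(\tilde C\times X)_{\Q}\;\cong\;p_1^*\Pic(\tilde C)_{\Q}\;\oplus\;p_2^*\Pic(X)_{\Q}\;\oplus\;\Hom(\Pic^0(\tilde C),\Pic^0(X))_{\Q}.
\end{equation}
Here the hypothesis $q(X)=0$ enters decisively: it forces $\Pic^0(X)=0$, so the third ``correspondence'' summand vanishes and $[\tilde V]=p_1^*(a)+p_2^*(b)$ with $a\in\Pic(\tilde C)$ a divisor (i.e.\ a $0$-cycle) on $\tilde C$ and $b\in\Pic(X)$ a divisor on $X$. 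Pushing forward by $\nu\times\mathrm{id}_X$ turns $p_1^*(a)=a\times X$ into the vertical class $\nu_*(a)\times X$, and turns $p_2^*(b)=\tilde C\times b$ into $[C]\times b$ (using $\nu_*[\tilde C]=[C]$), a product of two divisors on $X$. Both are among the generators of $\sI(X)$, whence $[V]\in\sI(X)$. The case where $V$ is not dominant by $p_2$ is handled by passing to the transpose.

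The main obstacle is exactly the vanishing of the third summand in the Picard decomposition: without $q(X)=0$ a non-dominant $V\subset C\times X$ records a possibly nonzero homomorphism $\Pic^0(\tilde C)\to\Pic^0(X)$ and need not be decomposable, so that $\sI(X)$ would be strictly smaller than $\sJ(X)$. The remaining points — controlling the singularities of $C$ through the normalization, checking that pushforward carries the two surviving summands precisely onto the vertical and divisor-product generators of $\sI(X)$, and reducing the ideal $\sJ(X)$ to its irreducible non-dominant generators — are routine.
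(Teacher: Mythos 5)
Your proof is correct and takes essentially the same route as the paper: after the formal inclusion $\sI(X)\subseteq\sJ(X)$, both arguments reduce a non-dominant correspondence to a cycle supported on $W\times X$ with $\dim W\le 1$ and, in the curve case, invoke the decomposition $\Pic(W\times X)_{\Q}\cong p_1^*\Pic(W)_{\Q}\oplus p_2^*\Pic(X)_{\Q}$, valid because $q(X)=0$ kills the summand of divisorial correspondences. Your version is merely more explicit about normalizing $W$ and pushing the two summands forward onto the generators of $\sI(X)$.
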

\begin {proof} From the definition of the ideals $\sJ(X)$ and  $\sI(X)$ we get $\sI(X\subseteq \sJ(X)$. Let $\Gamma \in \sJ(X)$ such that $\Gamma$ 
is not dominant over $X$ under the first projection. We claim that $\Gamma$ belongs to the ideal of degenerate correspondences. 
 $\Gamma$ vanishes on some $V \times X$, with $V$ open in $X$, hence it has support on $W \times X$, with $\dim W \le 1$. If $\dim W =0$ then 
 $\Gamma = \sum_i  n_i [X \times P_i] $ in $A^2(X\times X)$, where $P_i$ are closed points in $X$. Hence $\Gamma \in \sI(X)$.
If $\dim W =1$ then  $\Gamma  \in A^1(W \times X)$, where $A^1(W \times X) = p^*_1(A^1(W)) \times p^*_2(A^1(X))$,  with $p_i$ the projections, because $H^1(X,\sO_X)=0$. Therefore  $\Gamma   \in \sI(X)$. 
\end{proof}
\begin{cor} Let $X$ be a K3 surface. Then the map $\Psi_X :  A^2(X \times X) \to \End _{\sM_{rat}}(t_2(X)$ yields the following isomorphisms
$$ A^2(X \times X)/\sI(X) \simeq \End _{\sM_{rat}}(t_2(X))\simeq A_0(X_{k(X)})/A_0(X)$$
where $ A_0(X_{k(X)}) = \varinjlim_{U\subset X} A^2 (U \times X)$ 
\end{cor}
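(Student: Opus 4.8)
The plan is to treat the two isomorphisms separately. The first, $A^2(X\times X)/\sI(X)\simeq \End_{\sM_{rat}}(t_2(X))$, is essentially automatic: the ring isomorphism (2.2) already gives $A^2(X\times X)/\sJ(X)\simeq\End_{\sM_{rat}}(t_2(X))$, and Lemma \ref{I(X)=J(X)} identifies $\sI(X)=\sJ(X)$ for a K3 surface since $q(X)=0$. So the real content is the second isomorphism $\End_{\sM_{rat}}(t_2(X))\simeq A_0(X_{k(X)})/A_0(X)$, which, in view of the first, I would establish in the equivalent form $A^2(X\times X)/\sI(X)\simeq A_0(X_{k(X)})/A_0(X)$ by means of the ``generic fibre'' map.

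First I would introduce the homomorphism $\phi\colon A^2(X\times X)\to A_0(X_{k(X)})$ obtained by restricting a correspondence to $U\times X$ and passing to the colimit, using the identification $A_0(X_{k(X)})=\varinjlim_{U\subset X}A^2(U\times X)$ from the statement. For an open $U$ with complement $Z=X\setminus U$ (so $\dim Z\le 1$), the localization sequence $A_2(Z\times X)\to A^2(X\times X)\to A^2(U\times X)\to 0$ shows that restriction is surjective with kernel the cycles supported on $Z\times X$. Passing to the filtered colimit over $U$, I conclude that $\phi$ is surjective and that $\ker\phi=\sK_1$, the subgroup of correspondences not dominant over $X$ under the first projection. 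Thus $\phi$ induces an isomorphism $A^2(X\times X)/\sK_1\simeq A_0(X_{k(X)})$, and since $\sK_1\subseteq\sJ(X)=\sI(X)$ it remains only to compute the image $\phi(\sI(X))$.

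Writing $\sK_2$ for the correspondences not dominant over $X$ under the second projection, one has $\sI(X)=\sJ(X)=\sK_1+\sK_2$, so that $\phi(\sI(X))=\phi(\sK_2)$ because $\phi(\sK_1)=0$. The inclusion $\phi(\sK_2)\supseteq \Im\bigl(A_0(X)\to A_0(X_{k(X)})\bigr)$ is clear, as $\phi(X\times P)$ is the constant class $[P]_{k(X)}$. For the reverse inclusion I would argue exactly as in the proof of Lemma \ref{I(X)=J(X)}: a generator of $\sK_2$ is supported on $X\times W'$ with $\dim W'\le 1$; the case $\dim W'=0$ gives the classes $X\times P$ directly, while for $\dim W'=1$ I use that $q(X)=0$ forces $A^1(X\times W')_{\Q}=p_1^*\Pic(X)_{\Q}\oplus p_2^*\Pic(W')_{\Q}$, the mixed term $\Hom(\Alb(X),\Pic^0(W'))$ vanishing because $\Alb(X)=0$. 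Hence every such correspondence is, modulo rational equivalence, a sum of terms $D\times W'$ and $X\times P$; the terms $D\times W'$ are not dominant over $X$ under $p_1$, so they lie in $\sK_1$ and are annihilated by $\phi$, while the terms $X\times P$ map into $\Im(A_0(X))$. Therefore $\phi(\sI(X))=\Im(A_0(X))$, and $\phi$ descends to the desired isomorphism $A^2(X\times X)/\sI(X)\simeq A_0(X_{k(X)})/A_0(X)$.

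The only delicate point is this last identification $\phi(\sI(X))=A_0(X)$, specifically the contribution of correspondences supported on $X\times W'$ with $W'$ a curve: a priori such a correspondence could move its zero-cycle on $W'_{k(X)}$ nonconstantly in the Jacobian of $W'$, which would enlarge $\phi(\sI(X))$ beyond the constant cycles. What rules this out is precisely the K3 hypothesis $q(X)=0$, through the vanishing of $\Hom(\Alb(X),\Pic^0(W'))$; this is the same mechanism that makes Lemma \ref{I(X)=J(X)} work, and it is where I expect the main work to lie.
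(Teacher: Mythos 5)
Your proposal is correct, and it diverges from the paper in an instructive way: for the first isomorphism you do exactly what the paper does (combine the ring isomorphism (2.2) with Lemma \ref{I(X)=J(X)}), but for the second isomorphism the paper simply cites [KMP, 7.5.10], whereas you reprove that identification from scratch. Your localization argument --- $A_2(Z\times X)\to A^2(X\times X)\to A^2(U\times X)\to 0$, passage to the filtered colimit to get $A^2(X\times X)/\sK_1\simeq A_0(X_{k(X)})$, and then the computation $\phi(\sI(X))=\phi(\sK_2)=\Im\bigl(A_0(X)\to A_0(X_{k(X)})\bigr)$ using $q(X)=0$ to kill the mixed term in $\Pic(X\times W')_{\Q}$ --- is sound, and you have correctly isolated the one delicate point: without $\Alb(X)=0$ a correspondence supported on $X\times W'$ could contribute a nonconstant family of zero-cycles on $W'_{k(X)}$, which is precisely the same mechanism that makes Lemma \ref{I(X)=J(X)} work (your argument is in effect the transpose of the paper's proof of that lemma). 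What the citation buys the paper is brevity and the general statement of [KMP, 7.5.10] (which handles surfaces with $q\neq 0$ via the Albanese kernel); what your route buys is a self-contained proof and, as a free by-product, the normalization the paper records at the end of its proof, namely that $\mathrm{id}_{t_2(X)}=\Psi_X(\Delta_X)$ corresponds to $\phi(\Delta_X)=[\xi]$, the class of the generic point --- a fact the paper uses repeatedly later (Theorem 2.9, Corollary 4.7, Theorem 4.9). The only cosmetic caveat is that for singular $W'$ one should pass to the normalization before decomposing $A^1(X\times W')_{\Q}$, a point the paper's own Lemma \ref{I(X)=J(X)} also elides.
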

\begin {proof} Everything follows from Lemma 2.3 and the isomorphism in [KMP.7.5.10], because $q(X)=0$. In the isomorphism  $ \End _{\sM_{rat}}(t_2(X)\simeq A_0(X_{k(X)})/A_0(X)$ the class $[\xi]$ of the generic point $\xi $ of $X$ in the ring  $A_0(X_{k(X)})$ corresponds to the identity map of the motive $t_2(X)$.
\end{proof}  

\begin {lm}\label {int-pair} Let $X$ be a complex K3 surface and let $Z \in A^2(X \times X)$ which acts as 0 on $H^{2,0}(X)$. Then $Z $ acts as 0 on $H^2_{tr}(X)$.\end{lm}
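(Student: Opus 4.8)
The plan is to view the action of $Z$ on $H^2(X,\Q)$ as a morphism of $\Q$-Hodge structures and to exploit the fact that the transcendental part is as small as a rational Hodge structure can be. Since $Z\in A^2(X\times X)$ and $\dim X=2$, the correspondence acts on cohomology without any Tate twist, so $\phi:=Z_*\colon H^2(X,\Q)\to H^2(X,\Q)$ is a morphism of weight-$2$ Hodge structures and in particular preserves Hodge types after complexification. First I would \emph{not} attempt to show directly that $\phi$ stabilizes $T:=H^2_{tr}(X,\Q)$, but rather work with the restriction $\phi|_T\colon T\to H^2(X,\Q)$, whose kernel $\ker(\phi|_T)$ is automatically a $\Q$-sub-Hodge structure of $T$.

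The essential input is that $T=H^2_{tr}(X,\Q)$ is a \emph{simple} (irreducible) $\Q$-Hodge structure. I would cite this as classical for K3 surfaces, or reprove it in a line: given a nonzero sub-Hodge structure $V\subseteq T$, the intersection pairing polarizes $T$, so its orthogonal complement splits $T=V\oplus W$ with $W$ again a sub-Hodge structure; if $V^{2,0}=0$ then $V$ is of pure type $(1,1)$, hence algebraic by the Lefschetz $(1,1)$ theorem, so $V\subseteq NS(X)_{\Q}\cap NS(X)_{\Q}^{\perp}=0$, a contradiction; therefore $V^{2,0}=H^{2,0}(X)$ since the latter is one-dimensional, which forces $W^{2,0}=0$ and, by the same argument, $W=0$, i.e. $V=T$. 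This is the one point at which the intersection pairing---the feature highlighted by the name of the lemma---really enters.

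With simplicity in hand the conclusion is immediate. By hypothesis $\phi$ annihilates $H^{2,0}(X)$, so if $\omega$ spans the one-dimensional space $H^{2,0}(X)=T^{2,0}$ then $\omega\in\ker\phi$. Hence the $(2,0)$-component of $\ker(\phi|_T)$ equals $\ker\phi\cap T^{2,0}=H^{2,0}(X)\ne 0$, so $\ker(\phi|_T)$ is a nonzero sub-Hodge structure of $T$. By simplicity of $T$ it must be all of $T$, i.e. $\phi|_T=0$, which is exactly the statement that $Z$ acts as $0$ on $H^2_{tr}(X)$. One may also note that $\phi$ is defined over $\Q$, so $\phi(\overline{\omega})=0$ as well and $\phi$ kills $H^{2,0}\oplus H^{0,2}$ outright, but this observation is not needed once simplicity is invoked.

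The only genuine obstacle is the simplicity of the transcendental Hodge structure; the remainder is formal Hodge theory. I expect to invoke simplicity as a known fact rather than to dwell on it, keeping the short self-contained argument above (via semisimplicity of polarized Hodge structures and Lefschetz $(1,1)$) as a fallback. A minor subtlety worth recording is that treating $\phi|_T$ as a map into all of $H^2(X,\Q)$ sidesteps the need to prove a priori that $\phi$ stabilizes $T$; the vanishing $\phi|_T=0$ delivers that stabilization a posteriori.
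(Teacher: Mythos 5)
Your proposal is correct and is essentially the paper's argument in different packaging: both rest on the Lefschetz $(1,1)$ fact that the rational classes orthogonal to $H^{2,0}(X)$ are exactly $NS(X)_{\Q}$, together with the nondegeneracy of the intersection pairing on $H^2_{tr}(X)$, applied to the rational subspace $\ker(Z_*)\cap H^2_{tr}(X)$, which contains $H^{2,0}(X)$. The paper argues directly that the orthogonal complement of this kernel is forced into $NS(X)\otimes\C$ and hence vanishes, while you route the same ingredients through the (slightly stronger, but standard) simplicity of the transcendental Hodge structure; the mathematical content is the same.
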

\begin {proof} Under the intersection pairing on $H^2(X,\C)$ the orthogonal complement of $NS(X) \otimes \C$ is $\pi^{tr}_2 H^2(X)$, where $\pi^{tr}_2$ is the class of $\Delta_X$ in $H^2_{tr}(X) \times H^2_{tr}(X) \subset H^4(X \times X)$. This immediately follows from the equality
$$a \cdot b =(a\cdot \pi^{tr}_2)\cdot (\pi^{alg}_2\cdot b) = a\cdot (\pi^{tr}_2 \cdot \pi^{alg}_2) \cdot b =0$$
where $a \in H^2_{tr}(X)$ and $ b\in NS(X)$, because $\pi^{tr}_2 \cdot \pi^{alg}_2 =0$. On the other hand
$$NS(X)_{\Q} =H^{1,1} \cap H^2(X,\Q) = \{x \in H^2(X) /x \cdot \omega =0 , \forall \omega \in H^{2.0}(X)\}.$$
Therefore, if $Z$ acts as 0 on $H^{2,0}(X)$ then  the orthogonal complement of  $\ker (Z)|_{H^2_{tr}}$ is contained in $NS(X) \otimes \C$, hence  $Z$ acts as 0 on $\pi^{tr}_2H^2(X)$.\end{proof}
\begin{defn} 
Let $X$ be a smooth projective variety  of dimension $d$ over a field $k$. 
The {\it indices} of a correspondence $\Gamma\subset X\times X$
are the numbers $\alpha(\Gamma)=\deg(\Gamma\cdot [P\times X])$ and
$\beta(\Gamma)=\deg(\Gamma\cdot[X\times P])$, where $P$ is any rational point on $X$; see \cite[16.1.4]{Fu}. 
The indices are additive in $\Gamma$, and $\beta(\Gamma)=\alpha({}^t\Gamma)$.\par
\noindent   A correspondence is said of {\it valence zero} if it belongs to
 the ideal $\sI(X)$ in $A^d(X \times X)$ of degenerate correspondences,
 A correspondence $\Gamma$ has {\it valence $v$} if 
$\Gamma +v\Delta_X$ has valence 0.  If $\Gamma_1, \Gamma_2$ in $A^d(X\times X)$ have valences $v_1,v_2$ then $\Gamma=\Gamma_1+\Gamma_2$ has valence $v_1 +v_2$, and $\Gamma_1\circ\Gamma_2$ has valence $-v_1v_2$ by \cite[16.1.5(a)]{Fu}. If $p\in A^d(X \times X)$is a projector, i.e. $p^2=p$, which has a valence, then $v(p)$ equals either 0 or -1.
\end{defn} 
\begin{lm}\label{Murre-D} Let $X$ be smooth projective surface over a field $k$ of characteristic 0 with  $q(X)=0$ and  $p_g(X) \ne 0$. Assume that every correspondence $\Gamma \in A^2(X \times X) $ has a valence.Then\par
(i) $A^2(X \times X)_{hom} \subset \sI(X)$;\par
(ii) Bloch's conjecture 1.1 holds true.
\end{lm}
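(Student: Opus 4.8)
The plan is to read off both statements from the ring isomorphism $A^2(X\times X)/\sI(X)\simeq \End_{\sM_{rat}}(t_2(X))$ of Corollary 2.4, which holds for any surface with $q(X)=0$ by combining the map $\Psi_X$ of $(2.2)$ with Lemma \ref{I(X)=J(X)}, together with a single positivity input: since $p_g(X)\ne 0$ the transcendental cohomology $H^2_{tr}(X)$ is nonzero. Indeed $H^{2,0}(X)\ne 0$, and $H^{2,0}(X)$ lies in the orthogonal complement of $NS(X)\otimes\C$ inside $H^2(X,\C)$, that is in $\pi^{tr}_2H^2(X)=H^2_{tr}(X)\otimes\C$ (the computation recorded in Lemma \ref{int-pair}).

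For (i), let $\Gamma\in A^2(X\times X)_{hom}$ and let $v=v(\Gamma)$ be its valence, so that $\Gamma+v\Delta_X\in\sI(X)$. Because $\sI(X)=\ker\Psi_X$, the degenerate correspondence $\Gamma+v\Delta_X$ acts as $0$ on $t_2(X)$, hence as $0$ on its realization $H^2(t_2(X))=H^2_{tr}(X)$. On the other hand $\Gamma$ is homologically trivial, so it too acts as $0$ on $H^2_{tr}(X)$, whereas $\Delta_X$ acts as the identity. Subtracting, $v\cdot\mathrm{id}=0$ on $H^2_{tr}(X)$; since $H^2_{tr}(X)\ne 0$ this forces $v=0$, and therefore $\Gamma=\Gamma+v\Delta_X\in\sI(X)$. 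This is assertion (i), and it is the only step where the hypothesis $p_g(X)\ne 0$ enters.

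For (ii), let $\Gamma\in A^2(X\times X)_{hom}$; by (i) it is degenerate, i.e. a $\Q$-combination of classes supported on $W\times X$ and $X\times W'$ with $\dim W,\dim W'\le 1$. These support conditions are preserved by base change to the universal domain $\Omega\supseteq k$, so the induced correspondence $\Gamma_\Omega$ lies in $\sI(X_\Omega)$. As $q(X_\Omega)=0$, the isomorphism $\sI(X_\Omega)=\ker\Psi_{X_\Omega}$ applies again, so $\Gamma_\Omega$ acts as $0$ on $t_2(X_\Omega)$ and in particular as $0$ on $A_0(t_2(X_\Omega))=T(X_\Omega)$. This is precisely Bloch's conjecture 1.1 for $X$.

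There is no single hard computation here; the crux is conceptual, namely that a valence, once it exists, is pinned down by the action on $H^2_{tr}(X)$ and must vanish exactly because $p_g(X)\ne 0$ makes $H^2_{tr}(X)$ nonzero — this is what replaces the $p_g=0$ analysis in which $v(\Delta_X)$ could be $0$. The only points requiring care are the validity of the identification $\sI(X)=\ker\Psi_X$ beyond the K3 case, for arbitrary surfaces with $q(X)=0$ (supplied by Lemma \ref{I(X)=J(X)} together with $(2.2)$), and the routine check in (ii) that both degeneracy and the identification $A_0(t_2)=T$ persist after base change to $\Omega$.
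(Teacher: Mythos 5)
Your proof is correct. For part (i) it is essentially the paper's argument in different clothing: the paper phrases the key step as the vanishing, for any $Z\in\sI(X)$, of the K\"unneth component of $cl(Z)$ in $H^2_{tr}(X)\otimes H^2_{tr}(X)$, together with the nonvanishing of the corresponding component of $cl(\Delta_X)$ because $p_g(X)\ne 0$; you phrase the very same step as the vanishing of the $H^2_{tr}$-block of the action of $\Gamma+v\Delta_X$ (via $\sI(X)=\ker\Psi_X$, which, as you correctly note, only needs $q(X)=0$ and is supplied by Lemma 2.3 together with (2.2)) against the identity action of $\Delta_X$ on the nonzero space $H^2_{tr}(X)$. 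For part (ii) you take a genuinely different, more structural route: the paper writes the degenerate correspondence explicitly as $\sum_i n_i[P_i\times X]+\sum_j m_j[X\times Q_j]$ with coefficients summing to zero and checks by hand that it annihilates $A_0(X_\Omega)_0$, whereas you base-change to $\Omega$, observe that degeneracy is preserved, reuse $\sI(X_\Omega)=\ker\Psi_{X_\Omega}$, and conclude from $A_0(t_2(X_\Omega))=T(X_\Omega)$. Your version buys a cleaner treatment of the passage to the universal domain (a point the paper glosses over) and does not require knowing the precise shape of degenerate cycles on a surface with $q=0$; the paper's version is more elementary, at the cost of implicitly relying on that explicit decomposition and on the generic-position computation for the action on zero-cycles. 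Both are valid.
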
 
\begin{proof} Let  $\Gamma \in A^2(X \times X)_{hom}$, with $\Gamma +v(\Gamma)\Delta_X \in \sI(X) $. Let
   $$cl : A^2(X \times X) \to H^4(X \times X) \simeq \sum_{0 \le p +q \le 4} H^p(X) \times H^q(X).$$
Then  $cl(\Gamma) =0$ and  $v(\Gamma) cl(\Delta_X) = cl(Z)$, with $Z \in \sI(X)$. For every cycle $Z\in \sI(X)$ the component of $cl (Z)$ in $H^2_{tr}(X) \otimes H^2_{tr}(X)$ vanishes. Therefore $v(\Gamma) cl(\Delta_X) =0$ in $H^2_{tr}(X) \otimes H^2_{tr}(X)$.  Since $p_g(X) \ne 0$ we have  $cl(\Delta_X) \ne 0$, hence $v(\Gamma) =0 $, i.e. $\Gamma  \in \sI(X)$. It follows that $\Gamma \in \sI(X) \cap A^2(X \times X)_{hom}$, which proves (i).\par
We have $\Gamma= Z_1 + Z_2 $, with $Z_1 =\sum_i n_i[P_i \times X]$, $Z_2 =\sum_j m_j[X \times Qj]$ and  $\sum_i n_i =\sum_j m_j =0$.  Let $\Omega $ be a universal domain containing $k$. Then, for every $\alpha \in A_0(X_{\Omega})_0$ we have 
$(Z_1)_*(\alpha) = (Z_2)_*(\alpha) =0$. Therefore $\Gamma$ acts as 0 on $A_0(X_{\Omega})_0 =T(X_{\Omega})$ and this proves that $X$ satisfies Bloch's conjecture  1.1.
\end{proof}
\begin{rk} Note that condition (i) above is equivalent to\par
\noindent  $\End_{\sM_{rat}}(t_2(X)) \simeq  \End_{\sM_{hom}}(t_2(X))$ and implies that every endomorphism of $h(X)$ which is homologically trivial is nilpotent, see [KMP,7.6.12].\end{rk}
 \begin{thm}\label{main thm}   Let $X$ be a   complex K3 surface .Then $X$ satisfies Bloch's conjecture if and only if   every correspondence $\Gamma \in A^2(X \times X)$ has a valence.
  If  $h(X)$ is finite dimensional  then all valences belong to $\bar Q$. \end{thm}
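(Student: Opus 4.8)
The plan is to recast valences as a statement about the endomorphism algebra of the transcendental motive, and then to test that algebra against the one-dimensional space $H^{2,0}(X)$. By the isomorphism $A^2(X\times X)/\sI(X)\simeq\End_{\sM_{rat}}(t_2(X))$ of the corollary following Lemma~\ref{I(X)=J(X)}, and since $\Psi_X(\Delta_X)=\pi^{tr}_2=\mathrm{id}_{t_2(X)}$ while $\sI(X)=\ker\Psi_X$, a correspondence $\Gamma$ has a valence $v$ precisely when $\Psi_X(\Gamma)=-v\,\mathrm{id}_{t_2(X)}$. Thus \emph{every} correspondence has a valence if and only if $\End_{\sM_{rat}}(t_2(X))=\Q\cdot\mathrm{id}_{t_2(X)}$. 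One implication is immediate: if every correspondence has a valence, then Bloch's conjecture follows from Lemma~\ref{Murre-D}, which applies since $q(X)=0$ and $p_g(X)=1\neq 0$.

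For the converse I would first record that the action of $\End_{\sM_{rat}}(t_2(X))$ on $T(X_\Omega)$ is faithful. Under the further identification $\End_{\sM_{rat}}(t_2(X))\simeq A_0(X_{k(X)})/A_0(X)$ of the same corollary, in which $\mathrm{id}_{t_2(X)}$ corresponds to the class $[\xi]$ of the generic point, an endomorphism $\phi$ is recovered as $\phi_*[\xi]$; choosing an embedding $k(X)\hookrightarrow\Omega$ realises $[\xi]$ as a very general point of $T(X_\Omega)$, so $\phi_*=0$ on $T(X_\Omega)$ forces $\phi=0$. Now assume Bloch's conjecture. Any homologically trivial $\phi\in\End_{\sM_{rat}}(t_2(X))$ lifts to a correspondence of the form $\pi^{tr}_2\Gamma\pi^{tr}_2$ whose class in $H^4(X\times X)$ lies in $H^2_{tr}(X)\otimes H^2_{tr}(X)$ and vanishes, hence is homologically trivial; by Bloch it acts as $0$ on $T(X_\Omega)$, and by faithfulness $\phi=0$. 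Therefore $cl\colon\End_{\sM_{rat}}(t_2(X))\to\End_{\sM_{hom}}(t_2(X))$ is injective, i.e. $A^2(X\times X)_{hom}\subseteq\sI(X)$.

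At this point Lemma~\ref{int-pair} provides the decisive reduction: an endomorphism acting as $0$ on $H^{2,0}(X)$ already acts as $0$ on $H^2_{tr}(X)$, so the ring homomorphism $\End_{\sM_{rat}}(t_2(X))\to\End(H^{2,0}(X))\cong\C$ given by the action on a holomorphic $2$-form factors through $cl$ and is injective. Each $\Psi_X(\Gamma)$ is therefore pinned down by the single complex scalar $v(\Gamma)$ by which it acts on $H^{2,0}(X)$: subtracting $v(\Gamma)\,\mathrm{id}_{t_2(X)}$ yields an endomorphism killing $H^{2,0}(X)$, hence $H^2_{tr}(X)$, hence (by injectivity of $cl$) the zero endomorphism. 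Modulo the rationality of $v(\Gamma)$ this is exactly the relation $\Gamma+v(\Gamma)\Delta_X\in\sI(X)$, so each correspondence is assigned its valence.

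The main obstacle is precisely this rationality: the argument only embeds $\End_{\sM_{rat}}(t_2(X))$ as a subring of $\C$, and a priori the period scalar need not be rational --- this is the situation of a K3 surface whose transcendental Hodge structure carries extra endomorphisms that happen to be realised by algebraic cycles. Forcing the image down to $\Q$ is where Bloch's conjecture must be exploited beyond mere injectivity of $cl$, and it is also the content of the final clause. When $h(X)$ is finite dimensional, $t_2(X)$ is evenly finite dimensional in the sense of Kimura, so $\End_{\sM_{rat}}(t_2(X))$ is a finite dimensional $\Q$-algebra embedding through $cl$ into $\End_{\sM_{hom}}(t_2(X))=\End_{\mathrm{Hodge}}(H^2_{tr}(X))$; the period scalar $v(\Gamma)$ is then an eigenvalue of the operator induced by $\Gamma$ on the rational vector space $H^2_{tr}(X,\Q)$, a root of a polynomial with rational coefficients, whence every valence lies in $\overline{\Q}$.
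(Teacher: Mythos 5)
Your proof of the equivalence follows essentially the paper's own route: one direction is Lemma~\ref{Murre-D}(ii), and for the converse you extract the scalar $\alpha_\Gamma$ by which $\Gamma$ acts on the line $H^{2,0}(X)$, use Lemma~\ref{int-pair} to conclude that $\Gamma-\alpha_\Gamma\Delta_X$ kills $H^2_{tr}(X)$, hence that $\pi^{tr}_2\circ(\Gamma-\alpha_\Gamma\Delta_X)\circ\pi^{tr}_2$ is homologically trivial, apply Bloch's conjecture to make it act as $0$ on $T(X_\Omega)$, and detect the vanishing in $\End_{\sM_{rat}}(t_2(X))$ on the class $[\xi]$ of the generic point via Corollary 2.4 --- exactly the paper's chain of reductions. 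Two points of comparison. First, the ``main obstacle'' you flag --- that $\alpha_\Gamma$ is a priori only a complex number, so the relation $\Gamma+v(\Gamma)\Delta_X\in\sI(X)$ is only obtained in $A^2(X\times X)\otimes\C$ --- is a genuine observation, but the paper does not overcome it either: its proof explicitly concludes that $\Gamma$ has valence $-\alpha_\Gamma$ \emph{in} $A^2(X\times X)\otimes\C$, i.e.\ the theorem is implicitly read with complex-valued valences, and Bloch's conjecture is not exploited further to force rationality. So you should not present this as an unresolved gap in your argument; rather it is a caveat on how the statement must be interpreted. Second, your proof of the final clause is genuinely different from, and simpler than, the paper's: you note that $\alpha_\Gamma$ is an eigenvalue of the $\Q$-linear operator induced by $\Gamma$ on the finite-dimensional space $H^2_{tr}(X,\Q)$ (the line $H^{2,0}(X)$ being an eigenline of its complexification), hence a root of a rational characteristic polynomial, so $v(\Gamma)\in\overline{\Q}$; the paper instead invokes Kimura finite dimensionality to produce a polynomial $G\in\Q[T]$ annihilating $-v(\Gamma)\cdot\mathrm{id}_{t_2(X)}$. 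Your argument has the added virtue of not using the finite-dimensionality hypothesis at all (and you should only claim the inclusion $\End_{\sM_{hom}}(t_2(X))\subseteq\End_{\mathrm{Hodge}}(H^2_{tr}(X,\Q))$ rather than equality, which would be the Hodge conjecture; the inclusion is all you need).
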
  
  \begin {proof} If every  correspondence $\Gamma \in A^2(X \times X)$ has a valence then $X$ satisfies Bloch's conjecture, by \ref{Murre-D} (ii).\par
  \noindent Conversely suppose that $X$ satisfies Bloch's conjecture and let $\Gamma \in A^2(X \times X)$. Let $\omega$ be a generator of $H^{2,0}(X)$ as a $\C$-vector space and let $\Gamma^*(\omega) =  \alpha_{\Gamma} \cdot \omega$, with $\alpha_{\Gamma} \in \C$. Let $Z = \Gamma -\alpha_{\Gamma} \Delta_X \in A^2(X \times X) \otimes \C$. Then the correspondence $Z$ acts as 0 on $H^{2,0}(X)$. From \ref{int-pair} $Z$ acts as 0 on $\pi^{tr}_2H^2(X)$.Therefore  $\bar Z=\pi^{tr}_2 \circ Z \circ\pi^{tr}_2$ acts as 0 on $H^2(X)$, hence  its cohomology class vanishes. From  Bloch' s conjecture we get that $\bar Z$ acts as 0 on $ A_0(X_{\Omega})_0$, where $\Omega=\C$. Since $\pi^{tr}_2$ acts as the identity on the group of 0-cycles of degree 0  also  $ Z$ acts as 0 on $A_0(X_{\Omega})$. Let $K =k(X)$ and choose an embedding $\sigma :  K \subset \Omega$. $\sigma$ induces an injective map $A_0(X_K) \to A_0(X_{\Omega})$. Therefore  $Z$ acts as  0  on $A_0(X_K)$. Let $[\xi]$ be the class of the generic point $\xi$ of $X$ in the quotient $A_0(X_K)/A_0(X)$. Then $Z_*([\xi])= =0$. From the isomorphism  in Corollary 2.4 we get $Z = \Gamma - \alpha_{\Gamma} \Delta_X \in \sI(X)$   and    hence $\Gamma $ has valence $v(\Gamma) =- \alpha_{\Gamma}$ in $A^2(X \times X) \otimes \C$.\par
\noindent If $h(X)$ is finite dimensional then $t_2(X)$ is evenly finite dimensional.  Let $\Gamma +v(\Gamma)\Delta_X \in \sI(X)$, with $v(\Gamma) \in \C$. Then 
$$ \Psi_X(\Gamma) = -v(\Gamma) \cdot id_{t_2(X)} \in (\End_{\sM_{rat}}(t_2(X)) \otimes C,$$
where $\Psi_X$ is the map in (2.2).  Let $n$ be such that $\bigwedge^n t_2(X)=0$. Then there exists a non-zero polynomial $G(T) \in \Q[T]$ of degree $n-1$ such that the  endomorphism $f_{\Gamma} =- v(\Gamma) \cdot id_{t_2(X)}$ satisfies $G(f_{\Gamma})=0$, see [MNP, Theorem 5.5.1]. Therefore $-v(\Gamma) \in \bar \Q$.
 \end{proof}
In [Vois 2, Cor.3.10] it is proved that, if the motive $h(X)$ of a complex K3 surface $X$ is finite dimensional, then $t_2(X)$ is indecomposable, that is any sub motive of it is either the whole motive or it is the 0-motive. The following Corollary shows that the same result holds true if every correspondence has a valence.
\begin {cor}\label {t2(X) } Let $X$ be complex K3 surface such that every correspondence in $ A^2(X \times X)$ has a valence.  Then the motive $ t_2(X)$ is  indecomposable  \end{cor}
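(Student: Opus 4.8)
The plan is to reformulate indecomposability of $t_2(X)$ as the absence of nontrivial idempotents in its endomorphism ring, and then to show that this ring collapses to $\C$ after extension of scalars. In the pseudoabelian category $\sM_{rat}$, a decomposition of $t_2(X)$ into two nonzero submotives corresponds to an idempotent $e \in R := \End_{\sM_{rat}}(t_2(X))$ with $e \ne 0$ and $e \ne id_{t_2(X)}$; so $t_2(X)$ is indecomposable precisely when $0$ and $id_{t_2(X)}$ are its only idempotents. By Corollary 2.4 the map $\Psi_X$ identifies $R$ with $A^2(X \times X)/\sI(X)$, sending $[\Delta_X]$ to $id_{t_2(X)}$.

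The key step is to compute $R \otimes_{\Q} \C$. Since every correspondence has a valence, Theorem \ref{main thm} applies; moreover its proof gives, for each $\Gamma \in A^2(X \times X)$, that $\Gamma - \alpha_{\Gamma}\Delta_X \in \sI(X)$ in $A^2(X \times X) \otimes \C$, where $\alpha_{\Gamma} \in \C$ is the scalar by which $\Gamma^*$ acts on the one-dimensional space $H^{2,0}(X)$. Applying $\Psi_X$ yields $\Psi_X(\Gamma) = \alpha_{\Gamma}\cdot id_{t_2(X)}$ in $R \otimes \C$. As the projection $A^2(X \times X) \to R$ is surjective, every element of $R \otimes \C$ is then a $\C$-multiple of $id_{t_2(X)}$; and since $H^{2,0}(X) \ne 0$ forces $t_2(X) \ne 0$, the identity is nonzero. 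Hence $R \otimes \C = \C\cdot id_{t_2(X)} \simeq \C$ as a $\C$-algebra.

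I would finish by descending idempotents: any idempotent $e \in R$ has image $e \otimes 1$ an idempotent of $R \otimes \C \simeq \C$, so $e \otimes 1 \in \{0,\ id_{t_2(X)} \otimes 1\}$; because $R$ is a $\Q$-vector space the inclusion $R \hookrightarrow R \otimes \C$ is injective, whence $e = 0$ or $e = id_{t_2(X)}$. This yields indecomposability, and notably without invoking finite dimensionality of $h(X)$, in contrast to the proof via [Vois 2].

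I expect the main obstacle to be the middle paragraph: one must extract the relation $\Gamma - \alpha_{\Gamma}\Delta_X \in \sI(X)$ over $\C$ from the proof of Theorem \ref{main thm} and verify that, combined with surjectivity of $A^2(X \times X) \to R$, it forces $R \otimes \C$ to be genuinely one-dimensional over $\C$, rather than merely showing that each individual correspondence acts as a scalar. The uniqueness of the valence, guaranteed by $p_g(X) = 1 \ne 0$, is what makes the assignment $\Gamma \mapsto \alpha_{\Gamma}$ well defined and the computation rigorous.
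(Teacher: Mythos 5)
Your proof is correct, but it is organized differently from the paper's, so a comparison is worth recording. The paper argues locally: a submotive of $t_2(X)$ is cut out by a projector $\pi$ with $\pi\circ\pi^{tr}_2=\pi^{tr}_2\circ\pi=\pi$, whose valence must be $0$ or $-1$; in the case $v(\pi)=-1$ it re-runs the $H^{2,0}$-argument of Theorem \ref{main thm} to see that $\pi^{tr}_2-\pi$ is homologically trivial, invokes Lemma \ref{Murre-D}(i) to place it in $\sI(X)$, and concludes $\Psi_X(\pi)=id_{t_2(X)}$ (and symmetrically $\Psi_X(\pi)=0$ when $v(\pi)=0$). You instead compute the entire ring $R=\End_{\sM_{rat}}(t_2(X))$ at once and descend idempotents from $R\otimes\C\simeq\C$; this is sound, yields the stronger intermediate fact that $R$ is a commutative integral domain (consistent with the closing remark of Section 2, where $R\simeq\bar\Q$ is discussed), and avoids the paper's cohomological detour. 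One simplification you could make: you do not need to extract $\Gamma-\alpha_\Gamma\Delta_X\in\sI(X)\otimes\C$ from the \emph{proof} of Theorem \ref{main thm} at all. The hypothesis that every $\Gamma$ has a valence already says $\Gamma+v(\Gamma)\Delta_X\in\sI(X)$, so by Corollary 2.4 the surjection $A^2(X\times X)\twoheadrightarrow A^2(X\times X)/\sI(X)\simeq R$ sends $\Gamma$ to $-v(\Gamma)\cdot id_{t_2(X)}$ directly; Theorem \ref{main thm} is only needed if one wants to identify $-v(\Gamma)$ with the Hodge-theoretic scalar $\alpha_\Gamma$. With that shortcut your middle paragraph, which you flagged as the main obstacle, becomes immediate, and the paper's case analysis on $v(\pi)\in\{0,-1\}$ is recovered as the specialization of your computation to the single projector $\pi$.
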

\begin {proof}The category $\sM_{rat}$ is pseudoabelian, hence a submotive $M$ of $t_2(X)$ is defined by a projector $\pi \in A^2(X \times X)$ such that 
$\pi\circ \pi^{tr}_2=\pi^{tr}_2 \circ \pi =\pi$. $\pi$ has a valence which may be either 0 or -1.   If $v(\pi) =-1 $ then $\pi$ acts as the identity on $H^{2,0}(X)$, hence  the correspondence $Z= \pi^{tr}_2- \pi$ acts as 0. By the same argument as in the proof of  Theorem 2.9  $Z $ is homologically trivial , hence, by \ref{Murre-D}(i) $Z \in \sI(X)$.Therefore $\pi $ equals the identity of $t_2(X)$. Similarly if $v(\pi)=0 $ then $\pi$ acts is the 0-map in $\End_{\sM_{rat}}(t_2(X))$. Therefore every submodule of $M=t_2(X)$ is either the whole motive or the 0-motive. This proves that $t_2(X)$ is indecmposable.  
  \end{proof}
 \begin {rk} Let $X$ be a general complex K3 surface, i.e  $X$ is a general member of  a smooth projective families $\{X_t \}$  over the disk $\Delta$.  For a general complex K3 surface $X$ it is known that the homological motive $t^{hom}_2(X) \in \sM_{hom}$ is absolutely simple, i.e it is   simple in the category $\sM_{hom}(\bar \Q)$, see [Ped1.5.4]. If  every correspondence $\Gamma \in A^2(X \times X)$ has a valence then, by Lemma  2.6 and Remark 2.7,  
 $$End_{\sM_{hom}}(t^{hom}_2(X)) \simeq End_{\sM_{rat}}(t_2(X)).$$  
Therefore $End_{\sM_{rat}} (t_2(X)) \simeq \bar \Q $. \end{rk} 
 
   \section{ Finite group of automorphisms on K3 surfaces}

 Let $X$ be a complex K3 surface and let $g$ be an  automorphisms of $X$.The global holomorphic $2$-forms $H^{2,0}(X)$ has complex dimension 1, i.e.  $H^{2,0}(X) \simeq  \C \omega$. Since any automorphism $g \in Aut \ (X)$ preserves the vector space $H^{2,0}(X)$, there is a non-zero complex number 
$\alpha(g) \in \C^*$ such that $g_*(\omega)= \alpha(g) \omega$. 
\begin{defn}  An automorphism $g\in G$  is { \it symplectic}  if $\alpha (g) =1$, i.e if $g$ acts as the identity on $H^{2,0}(X)$, while $g$ is   non symplectic if  $\alpha(g) \ne 1$. If $g$ is non-symplectic and has order $m$ then $\alpha (g) =\zeta $, is  a primitive $m$-th root of unity in $\C^*$ \end{defn}
Nikulin (see  \cite[Thm.3.1]{Ni}) proved that $\alpha(Aut X)$ is a finite cyclic group  of order $m$ and the Euler function $\phi(m)$ divides the rank of $T_{X,\Q}$.  Here $T_{X,\Q}=H^2_{tr}(X,\Q)$ denotes the lattice of transcendental cycles on $X$.  $T_{X,\Q}$ can be described as the orthogonal complement of $NS(X)_{\Q}$ in $H^2(X,\Q)$. $H^2_{tr}(X,\Q)$ is the smallest sub-Hodge structure of $H^2(X,\Q)$ containing $H^{2,0}( X)$.Since $H^{2,0}(X) \subset H^2_{tr}(X ) \otimes \C$ is compatible with the action of an automorphism $g$ of $X$, $g$ is symplectic iff it acts as the identity on $H^2_{tr}(X)$.\par 
 \noindent   Let $g$ be   of order $m$ and let $p =(1/m) \sum_{0\le i\le m-1} \Gamma_{g^i}$ and   $\Psi_X(p) =\pi \in End_{\sMrat}(t_2(X))$. $\pi$ is a projector, hence $\pi$  acts either  as  0 or as the identity on $H^{2,0}(X) \simeq \C \omega$. If $g$ is  non-symplectic then $g_*(\omega) = \zeta  \omega$, with $\zeta $ a primitive $m$-th root of 1, so that $\pi$ acts as 0 on $H^{2,0}(X)$.  If $g$ is symplectic then $g_*(\omega)= \omega$ and $\pi$ acts as the identity on $H^{2,0}(X)$.  In the first case   $\pi_*= 0$ on $H^2_{tr}(X,\Q)$.  In the second case  $\pi_* $ equals the identity  on $H^2_{tr}(S,\Q)$
 The following result has been proved by C.Voisin (see [Vois 1] ) in the case of a symplectic involution, and then extended by D.Huybrects in [Huy 1]  to any symplectic automorphism  $g$ of finite order.  
 \begin {thm}(Huybrechts) 
Let $g$ be a symplectic automorphism of finite order on a complex K3
 surface $X$.Then $g$ acts as the identity on $A_0(X)_0$.\end{thm}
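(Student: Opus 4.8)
The plan is to recast the statement as an identity in $\End_{\sM_{rat}}(t_2(X))$, to prove it first when $h(X)$ is finite dimensional, and then to remove that hypothesis by a deformation argument.

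First I would reduce to the transcendental motive. Since $A_0(X)_0 = A_0(t_2(X)) \simeq \Hom_{\sM_{rat}}(\un, t_2(X))$ and $\pi^{tr}_2$ acts as the identity on $A_0(X)_0$, the automorphism $g$ acts on $A_0(X)_0$ through the endomorphism $\phi := \Psi_X(\Gamma_g) = \pi^{tr}_2 \circ \Gamma_g \circ \pi^{tr}_2 \in \End_{\sM_{rat}}(t_2(X))$; hence it suffices to prove $\phi = \operatorname{id}_{t_2(X)}$. Two properties of $\phi$ are immediate. Because $g^n = \operatorname{id}$ we have $\Gamma_{g^n} = \Delta_X$, so the functoriality of $\Psi_X$ together with $\Gamma_{g^i} \circ \Gamma_{g^j} = \Gamma_{g^{i+j}}$ gives $\phi^n = \operatorname{id}_{t_2(X)}$; thus $\phi$ is annihilated by the separable polynomial $T^n - 1 \in \Q[T]$, so the subalgebra $\Q[\phi] \subset \End_{\sM_{rat}}(t_2(X))$ is an \'etale $\Q$-algebra, a finite product of fields. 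On the other hand, since $g$ is symplectic it acts as the identity on $H^{2,0}(X)$, hence by Lemma \ref{int-pair} as the identity on $H^2_{tr}(X) = H^2(t_2(X))$; therefore $\phi - \operatorname{id}_{t_2(X)}$ is homologically trivial.

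Assume now that $h(X)$ is finite dimensional in the sense of Kimura, equivalently that $t_2(X)$ is evenly finite dimensional. By the nilpotence theorem [KMP, Thm. 7.6.12] (see also [MNP, Thm. 5.5.1]) a homologically trivial endomorphism of a finite dimensional motive is nilpotent, so $\phi - \operatorname{id}_{t_2(X)}$ is nilpotent. But this element lies in the \'etale algebra $\Q[\phi]$, which contains no nonzero nilpotents; hence $\phi - \operatorname{id}_{t_2(X)} = 0$. By the reduction above $g$ acts as the identity on $A_0(X)_0$, which settles the theorem in the finite dimensional case.

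The remaining, and genuinely hard, point is to dispense with finite dimensionality, which is not known for a general K3 surface. For this I would deform $X$ inside a smooth projective family $\pi : \sX \to B$ of K3 surfaces, each equipped with a symplectic automorphism of order $n$ specializing to $g$; such families, together with their period maps, are provided by the deformation theory of K3 surfaces with a prescribed symplectic action. The relative correspondence $\Gamma_g - \Delta$ induces on each fibre the map $(g_t)_* - \operatorname{id}$ on $A_0(X_t)_0$, which by the case already treated vanishes on every member whose motive is finite dimensional. The main obstacle is then to propagate this vanishing to the very general, and finally to every, fibre: I would use a Bloch--Srinivas type spreading-out argument to show that the locus of $t \in B$ where $\Gamma_g - \Delta$ acts nontrivially on $A_0(X_t)_0$ is contained in a countable union of proper closed subsets of $B$, and combine it with the specialization behavior of zero-cycles to conclude for $X$ itself. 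Making this precise---in particular guaranteeing a dense supply of fibres with finite dimensional motive and controlling the non-vanishing locus---is the technical heart of the proof, and is the step I expect to require the most care.
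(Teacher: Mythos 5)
First, a point of comparison: the paper does not prove this statement at all. It is recorded as Theorem~3.2 precisely because it is imported from the literature --- proved by Voisin [Vois~1] for symplectic involutions and extended by Huybrechts [Huy~1] to arbitrary finite order --- and the paper then uses it as an input (e.g.\ in Theorem~3.3). So there is no internal proof to match your argument against; the comparison has to be with the cited proofs.

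Your reduction and your treatment of the finite dimensional case are correct and standard: $\phi=\Psi_X(\Gamma_g)$ satisfies $\phi^n=\operatorname{id}$, so $\Q[\phi]$ is an \'etale algebra with no nonzero nilpotents, while $\phi-\operatorname{id}$ is homologically trivial by Lemma~\ref{int-pair} and hence nilpotent when $t_2(X)$ is evenly finite dimensional; therefore $\phi=\operatorname{id}$. The gap is in the deformation step, and it is a missing idea rather than a deferred verification. The spreading/countability lemma you appeal to goes the wrong way: for a relative cycle on $\sX\times_B\sX$, it is the locus of $t$ where the cycle restricts to zero in $A^2(X_t\times X_t)$ (equivalently, acts as zero on $A_0(X_t)_0$) that is a countable union of closed subsets of $B$, not its complement. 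So even granting a dense supply of fibres with finite dimensional motive (which does exist, e.g.\ the fibres with $\rho=20$), you only learn that the ``good'' locus contains a dense countable union of proper closed subsets --- and a dense countable union of proper closed subsets need not be all of $B$, so nothing propagates to the very general fibre, let alone to the given $X$. This is exactly the obstruction that makes the theorem genuinely difficult. Voisin's proof for involutions avoids it by a much more specific mechanism: working over the full moduli space of K3 surfaces with a symplectic involution, she shows that the relative correspondence $\Gamma_g-\Delta$, being fibrewise cohomologically trivial on $H^2_{tr}$, can be decomposed (after her delicate results on cycles in families and the geometry of the universal family) into pieces supported on products with curves on each fibre, which then act as zero on $A_0(X_t)_0$ for \emph{every} $t$; Huybrechts' extension to arbitrary finite order needs additional input beyond this (the lattice-theoretic classification of symplectic actions and Fourier--Mukai/derived-category techniques). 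None of that is supplied by, or recoverable from, the Bloch--Srinivas countability argument as you have set it up.
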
 
A finite group $G$ of automorphism of the K3 surface $X$ acts on the transcendental motive $t_2(X)$, via the action of the correspondences $\Gamma_g$.  The following result characterizes the action of symplectic and non-symplectic automorphisms on $t_2(X)$and proves that the projector $p =(1/n)\sum _{g \in G} \Gamma_g$ has a valence.
\begin{thm}\label{Theorem 2} 
Let $X$ be a K3 surface over $\C$  with a finite group $G$ of order $n$ of
automorphisms.  Let $Y$ be a  minimal desingularization of the quotient surface $X/G$.
Then \par
   (i) If  $G$ consists of symplectic automorphisms $t_2(X)^G =t_2(X)= t_2(Y) $ and the rational map $X \to Y$ induces an isomorphism of motives $h(X) \simeq h(Y)$.\par
   (ii)  If  $G$ consists of  non- symplectic automorphisms, then   $t_2(X)^G =t_2(Y)=O$, which implies that $p_g(Y)=0$.\par
   \noindent The projector  
$$p =(1/n)\sum _{g \in G} \Gamma_g$$
has valence $v(p)= -1$,  if $G$ consists of symplectic automorphisms, while $v(p)=0 $ if the automorphisms of $G$ are non symplectic.
 \end{thm}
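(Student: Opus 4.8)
The plan is to deduce every assertion from the machinery of Section~2: the isomorphism $A^2(X\times X)/\sI(X)\simeq\End_{\sM_{rat}}(t_2(X))$ of Corollary 2.4 and the implication ``acts as $0$ on $H^{2,0}(X)$ $\Rightarrow$ acts as $0$ on $H^2_{tr}(X)$'' of Lemma~\ref{int-pair}. First I would check that $p=(1/n)\sum_{g\in G}\Gamma_g$ is a projector, from $\Gamma_g\circ\Gamma_h=\Gamma_{gh}$ and the group law, and compute its action on a generator $\omega$ of $H^{2,0}(X)$. As $g\mapsto\alpha(g)$ is a character of $G$, one gets $p^*\omega=\big((1/n)\sum_{g\in G}\alpha(g)\big)\omega$. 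In the symplectic case all $\alpha(g)=1$, so $p$ acts as the identity on $H^{2,0}(X)$; in the non-symplectic case the hypothesis forces $\alpha$ to be a nontrivial (indeed faithful, so $G$ is cyclic) character, whence $\sum_g\alpha(g)=0$ and $p$ acts as $0$ on $H^{2,0}(X)$. By Lemma~\ref{int-pair} the same holds on $H^2_{tr}(X)$, so $\pi=\Psi_X(p)$ is the identity, resp.\ $0$, on $H^2(t_2(X))$.

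Next I would pin down the valences by combining this with the action on $A_0$ and the argument of Theorem 2.9. In the symplectic case set $Z=p-\Delta_X$; it kills $H^{2,0}(X)$, hence $H^2_{tr}(X)$ by Lemma~\ref{int-pair}, so $\bar Z=\pi^{tr}_2\circ Z\circ\pi^{tr}_2$ is homologically trivial. By Theorem 3.2 each symplectic $g$ acts as the identity on $A_0(X)_0$, so $p_*=\mathrm{id}$ there and $Z$ acts as $0$ on $A_0(X_\Omega)_0$ with $\Omega=\C$; transferring along an embedding $k(X)\subset\Omega$, evaluating on the class $[\xi]$ of the generic point and applying Corollary 2.4 exactly as in Theorem 2.9 yields $Z\in\sI(X)$, i.e.\ $v(p)=-1$. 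In the non-symplectic case I take $Z=p$, already homologically trivial on $t_2(X)$, and get the action on $A_0$ from the quotient: since $H^{2,0}(X)^G=0$, the invariant sub-Hodge structure $H^2_{tr}(X)^G$ is of pure type $(1,1)$, hence algebraic and orthogonal to $NS(X)$, so it vanishes, and therefore $H^2_{tr}(Y)=H^2_{tr}(X)^G=0$ gives $p_g(Y)=0$; moreover $H^1(Y)=H^1(X)^G=0$ gives $q(Y)=0$, and the dominant rational map $X\to Y$ together with $\kappa(X)=0$ shows $Y$ is not of general type. Hence Bloch's conjecture, a theorem for surfaces not of general type, gives $A_0(Y_\Omega)_0=0$, and since quotient singularities are rational the transfer isomorphism $A_0(X_\Omega)_0^{G}\simeq A_0(Y_\Omega)_0=0$ forces $p_*=0$ on $A_0(X_\Omega)_0$; the same argument as before then gives $p\in\sI(X)$, i.e.\ $v(p)=0$.

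The structural statements follow by reading these conclusions through $\Psi_X$. Writing $t_2(X)^G$ for the image of the projector $\pi=\Psi_X(p)$, the symplectic outcome $v(p)=-1$ means $\pi=\mathrm{id}_{t_2(X)}$, so $t_2(X)^G=t_2(X)$, while the non-symplectic outcome $v(p)=0$ means $\pi=0$, so $t_2(X)^G=0$. For (ii) it then suffices to note $t_2(Y)=0$, which is immediate from $p_g(Y)=q(Y)=0$ and the Bloch theorem for $Y$ just used; this also records $p_g(Y)=0$. For (i) I must still identify $t_2(Y)$ with $t_2(X)^G=t_2(X)$ and upgrade this to $h(X)\simeq h(Y)$. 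Here I would use that for symplectic $G$ the minimal resolution $Y$ of $X/G$ is again a K3 surface, that $H^2_{tr}(Y)=H^2_{tr}(X)^G=H^2_{tr}(X)$ (the resolution only adds the algebraic classes of the exceptional $(-2)$-curves), and hence $\rho(Y)=22-\rg H^2_{tr}(Y)=\rho(X)$; comparing $h(X)=\un\oplus\L^{\oplus\rho}\oplus t_2(X)\oplus\L^2$ with $h(Y)=\un\oplus\L^{\oplus\rho}\oplus t_2(Y)\oplus\L^2$ then yields $h(X)\simeq h(Y)$ once $t_2(X)\simeq t_2(Y)$ is in hand.

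The main obstacle is exactly this last identification $t_2(Y)\simeq t_2(X)^G$ realized geometrically; for (ii) it is harmless since both sides vanish, but for (i) it carries the real content. I would construct it from the correspondence $\Gamma\subset X\times Y$ given by the closure of the fibre product of the quotient map $\phi\colon X\to X/G$ and the resolution $\psi\colon Y\to X/G$, and verify that, modulo $\sI$, suitable compositions of $\Gamma$ and $(1/n){}^t\Gamma$ recover the projectors $\pi^{tr}_2(Y)$ and $\pi=\Psi_X(p)$ through Corollary 2.4. The two delicate points are controlling the exceptional divisors of $\psi$, which contribute only algebraic classes and so act trivially on the transcendental parts --- by the argument of Lemma~\ref{I(X)=J(X)}, using $H^1(\sO)=0$ --- and matching the degree-$n$ normalizations so that the induced isomorphism $H^2_{tr}(X)^G\simeq H^2_{tr}(Y)$ on cohomology lifts to an isomorphism $t_2(X)^G\simeq t_2(Y)$ of Chow motives. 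Granting this, $h(X)\simeq h(Y)$ follows from the matching decompositions above, which completes (i) and the proof.
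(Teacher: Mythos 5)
Your argument is correct and uses the same basic toolbox as the paper (Huybrechts' Theorem 3.2, the Nikulin/not-of-general-type input for the quotient, Corollary 2.4, and the correspondences $\Gamma_f$ and ${}^t\Gamma_f$), but it is organized in the opposite order, and this changes which auxiliary results are needed. You establish the valences first: in the symplectic case you feed Huybrechts' theorem into the generic-point mechanism of Theorem 2.9 (embed $k(X)$ in $\C$, evaluate $Z=p-\Delta_X$ on $[\xi]$, apply Corollary 2.4) to get $v(p)=-1$, and in the non-symplectic case you derive $H^2_{tr}(X)^G=0$, hence $p_g(Y)=q(Y)=0$ with $Y$ not of general type, and push $A_0(Y_\Omega)_0=0$ back through $\pi^*\pi_*=np_*$ to get $v(p)=0$; the statements about $t_2(X)^G$ are then read off from $\Psi_X(p)$ being $\mathrm{id}$ or $0$. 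The paper goes the other way: it first proves $t_2(X)=t_2(X)^G\simeq t_2(Y)$ by exhibiting $t_2(X)^G$ as a direct summand via $(1/n){}^t\Gamma_f$ and killing the complement $N$ with the Gorchinskiy--Guletskii lemma that a motive with vanishing Chow groups is zero ($A_i(N)=0$ for all $i$ because $A_0(X)^G=A_0(X)$ by Huybrechts), and only then deduces the valences from Corollary 2.4. Your route buys a proof of $v(p)=-1$ and of $t_2(X)^G=t_2(X)$ that avoids [GG, Lemma 1] entirely; the paper's route buys the isomorphism $t_2(X)\simeq t_2(Y)$ as a byproduct of the same computation. The one step you defer (``granting this''), namely that $(1/n){}^t\Gamma_f$ inverts $\Gamma_f$ on transcendental motives after blowing up to reduce to $Y=X/G$, is exactly the computation the paper carries out (${}^t\Gamma_f\circ\Gamma_f=np$ and $\Gamma_f\circ{}^t\Gamma_f=n\Delta_Y$ modulo $\sI$), so your sketch is completable as written; note that with $v(p)=-1$ already in hand it immediately gives a two-sided inverse, so no separate argument for $N=0$ is needed.
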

\begin {proof}  (i) From Theorem 3.2   every symplectic automorphism 
$g \in G$ acts trivially on $A_0(X)$, so that $A_0(X)^G =A_0(X)$.
 Since $G$ is symplectic there are a finite number
 $\{P_1,\cdots,P_k\}$ of isolated fixed points for $G$ on $X$. Let $Y$
 be a minimal desingularization of the quotient surface $X/G$. The
 maps $f : X \to X/G$ and $\pi : Y \to X/G$ yield a rational map $X
 \to Y$, which is defined outside $\{P_1,\cdots,P_k\}$.  
 $t_2(-)$ is a birational invariant for smooth projective surfaces, hence we
 may blow up $X/G$ to assume that $Y =X/G$. We get a map $\theta :
 t_2(X) \to t_2(Y)$, and we claim that $\theta$ is the projection onto
 a direct summand. Let $f : X \to Y$. Then $\theta =
 \Psi_{X,Y}(\Gamma_f)$.  $\Gamma_f$ has a right inverse $(1/m)({}^t\Gamma_ f)$ and $m
 p={}^t\Gamma_ f \circ \Gamma_f =\Delta_X +\sum_{g\ne 1} \Gamma_g$. It
 follows, from the functoriality of the map $\Psi$,  that $t_2(X)^G$ is
 a direct summand of $t_2(X)$. Let
$$t_2(X)=t_2(X)^G \oplus N.$$
Since $A_i(t_2(X))=0$, for $ i \ne 0$, and $A_0(X)^G =A_0(X)$ we get $A_i(N)=0$, for all $ i$. From \cite[Lemma 1]{GG} we get  
$N=0$, hence  $t_2(X) =t_2(X)^G \simeq t_2(Y)$.\par
The surface  $Y$ is a K3 surface and $H^2_{tr}(X,\Q) \simeq H^2_{tr}(Y,\Q)$ because every $g \in G$ acts trivially on $H^2_{tr}(X)$. Thus $\rho= \rho(X) =\rho(Y)$. The motives $h(X)$ and $h(Y)$ have Chow-K\"unneth decomposition as follows
 $$h(X)\simeq \un \oplus \L^{\oplus  \rho} \oplus t_2(X) \oplus \L^2;$$
 $$h(Y)\simeq \un \oplus  \L^{\oplus  \rho} \oplus t_2(Y \oplus  \L^2.$$
from the isomorphism $t_2(X) \simeq t_2(Y)$ we get $h(X) \simeq h(Y)$.\par
  (ii)  Let $Y$ be a desingularization of the quotient surface $X/G$. We have $t_2(Y) =t_2(X)^G$,  because $t_2(X)$ is a birational invariant.  Also $q(Y) =0$   and $p_g(Y)=0$, see \cite[Thm.3.1a)]{Ni}.  $Y$ is a not a surface of general type, hence  $A_0(X)_0 =0$ which implies $t_2(Y)=0$.\par
\noindent If every $g \in G$ is symplectic then , by (i), $\Gamma_g$ acts as the identity on $t_2(X)$, for all $g \in G$. From the isomorphism in Cor 2.4  we get $\Gamma_g -\Delta_X \in \sI(X)$, for every $g \in G$ i.e $v(\Gamma_g)= -1$. Therefore $v(p) =-1$. If $G$ consists of  
non-symplectic automorphisms, then, by (ii), $t_2(Y) =t_2(X)^G=0$, and hence $p$ acts as 0 on $t_2(X)$. Therefore $p \in \sI(X)$ and $v(p)=0$. 
 \end{proof}
Next we   apply   Theorem 3.3 to compute the virtual number of coincidences for the projector $p$ associated to a finite group of automorphisms on a  K3 surface, using  a formula by Severi , as reconstructed in [PW 1,p 499].
 \begin{thm}[Severi's formula]\label{Theorem 3}
Let $X$ be a smooth projective surface. If $\Delta_X$ does not have
valence 0 and $\Gamma\in A^2(X\times X)$ is a correspondence with
valence $v$, then
$$
\deg(\Gamma\cdot\Delta_X) = 
\alpha(\Gamma) +\beta(\Gamma) + t(\Gamma)
 +  v\cdot\left(2+\rho(X)-c_2(X)\right).$$
where $t(\Gamma)$ is the trace of the action of $\Gamma$ on the 
finite dimensional vector space $NS(X) \otimes\Q$.
 \end{thm}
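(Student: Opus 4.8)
The plan is to derive the formula from the Lefschetz coincidence (Lefschetz--Hopf) formula, which expresses the coincidence number $\deg(\Gamma\cdot\Delta_X)$ as an alternating sum of traces on cohomology, and then to read off each trace using the valence. Concretely, for any $\Gamma\in A^2(X\times X)$ one has
$$\deg(\Gamma\cdot\Delta_X)=\sum_{i=0}^{4}(-1)^i\operatorname{tr}\bigl(\Gamma\mid H^i(X,\Q)\bigr),$$
and the whole point is to match the five trace contributions with the four terms on the right-hand side. The hypothesis that $\Delta_X$ does not have valence $0$ enters first: it guarantees that the valence $v=v(\Gamma)$ is unique, for otherwise, as recalled in the introduction, $\Delta_X$ would carry two valences and $v$ would be ambiguous; equivalently $cl(\Delta_X)\neq 0$ on $H^2_{tr}(X)\otimes H^2_{tr}(X)$, i.e. $p_g(X)\neq 0$, so that the statement is well posed.

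First I would treat the even, algebraic contributions, which are essentially formal. The actions on $H^0(X)$ and $H^4(X)$ are governed by the K\"unneth components of $cl(\Gamma)$ in $H^4(X)\otimes H^0(X)$ and $H^0(X)\otimes H^4(X)$, and by definition these traces are precisely the indices, so that $\operatorname{tr}(\Gamma\mid H^0)+\operatorname{tr}(\Gamma\mid H^4)=\alpha(\Gamma)+\beta(\Gamma)$. Next, since every algebraic correspondence is a morphism of Hodge structures it preserves the orthogonal decomposition $H^2(X,\Q)=NS(X)_{\Q}\oplus H^2_{tr}(X)$; hence $\operatorname{tr}(\Gamma\mid H^2)=t(\Gamma)+\operatorname{tr}(\Gamma\mid H^2_{tr}(X))$, where the first summand is exactly the trace $t(\Gamma)$ on $NS(X)\otimes\Q$ appearing in the statement.

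The key step is to evaluate the remaining transcendental and odd traces through the valence. Writing $Z=\Gamma+v\Delta_X\in\sI(X)$, I would invoke the fact already used in the proof of Lemma \ref{Murre-D}: for a degenerate correspondence $Z$ the component of $cl(Z)$ in $H^2_{tr}(X)\otimes H^2_{tr}(X)$ vanishes, so $Z$ acts as $0$ on $H^2_{tr}(X)$; the analogous vanishing of the odd K\"unneth components shows that $Z$ acts as $0$ on $H^1(X)$ and $H^3(X)$ as well. Since $\Delta_X$ acts as the identity everywhere, it follows that $\Gamma=Z-v\Delta_X$ acts as $-v\cdot\mathrm{id}$ on each of $H^1(X)$, $H^3(X)$ and $H^2_{tr}(X)$, whence $\operatorname{tr}(\Gamma\mid H^1)=\operatorname{tr}(\Gamma\mid H^3)=-v\,b_1(X)$ and $\operatorname{tr}(\Gamma\mid H^2_{tr})=-v\,(b_2(X)-\rho(X))$.

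Finally I would assemble the pieces. Substituting into the Lefschetz sum gives
$$\deg(\Gamma\cdot\Delta_X)=\alpha(\Gamma)+\beta(\Gamma)+t(\Gamma)+v\bigl(2b_1(X)-b_2(X)+\rho(X)\bigr),$$
and using $c_2(X)=\chi_{\mathrm{top}}(X)=2-2b_1(X)+b_2(X)$ one rewrites $2b_1-b_2=2-c_2$, producing the asserted coefficient $v\,(2+\rho(X)-c_2(X))$. I expect the genuine obstacle to be the third step: justifying that the degenerate part $Z$ acts trivially not only on $H^2_{tr}$ (the input already exploited in Lemma \ref{Murre-D}) but also on the odd cohomology. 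This is automatic for the K3 surfaces that motivate the paper, where $q(X)=0$ forces $H^1(X)=H^3(X)=0$ and $\sI(X)=\sJ(X)$ is generated by products of curves and by the point--correspondences $[P\times X]$, $[X\times P]$, all of whose classes are of even K\"unneth type; the only care needed for a general surface is to control the mixed cycles supported on $C\times X$, and it is exactly there that the structure of the ideal $\sI(X)$ must be brought to bear.
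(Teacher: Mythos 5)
Your argument is correct, and it is worth noting at the outset that the paper itself contains no proof of this statement: Theorem 3.4 is quoted from [PW 1, p.~499], where it is obtained exactly along the lines you propose, namely from the trace formula $\deg(\Gamma\cdot\Delta_X)=\sum_{i}(-1)^i\operatorname{tr}\bigl(\Gamma^*\mid H^i(X,\Q)\bigr)$ combined with the fact that a correspondence of valence $v$ acts as $-v\cdot\mathrm{id}$ on the transcendental part of cohomology. The one step you flag as a potential obstacle --- that the degenerate cycle $Z=\Gamma+v\Delta_X$ kills not only $H^2_{tr}(X)$ but also $H^1(X)$ and $H^3(X)$ --- is in fact automatic from the definition of $\sI(X)$ used throughout the paper: degenerate correspondences are $\Q$-linear combinations of \emph{products} of cycles $V\times W$ with $\dim V+\dim W=2$ (this is the description that emerges in the proof of Lemma 2.3 and is made explicit for $\sI(C,X)$ in Lemma 4.3), so $cl(Z)$ is a sum of decomposable classes $cl(V)\otimes cl(W)$ with both factors algebraic; every K\"unneth component of such a class is of type $(\mathrm{even},\mathrm{even})$ and lands in $NS(X)_\Q\otimes NS(X)_\Q$ in middle degree, whence $Z$ annihilates $H^1$, $H^3$ and $H^2_{tr}$ simultaneously. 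The ``mixed'' cycles supported on $C\times X$ that worry you lie in the larger ideal $\sJ(X)$, not in $\sI(X)$ (the two coincide only under $q(X)=0$, Lemma 2.3), so they never arise here. One small caveat: your parenthetical claim that the hypothesis ``$\Delta_X$ does not have valence $0$'' is \emph{equivalent} to $p_g(X)\neq 0$ is an overstatement --- $p_g(X)\neq 0$ does force $\Delta_X\notin\sI(X)$ by the K\"unneth argument above, but the converse (for $q=0$) is precisely Bloch's conjecture; the hypothesis as stated is exactly what is needed for the uniqueness of $v$, which is all your proof uses. The remaining bookkeeping ($\operatorname{tr}$ on $H^0$ and $H^4$ giving $\alpha(\Gamma)+\beta(\Gamma)$, and $2b_1-b_2=2-c_2$) is correct.
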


 \begin{cor}\label{p and q} 
Let $X$ be a K3 surface with a finite
group $G$ of  automorphism of order $n$. Let $p =(1/n)\sum_{g \in G} \Gamma_g$. If $G$ consists of symplectic automorphisms then  
\begin{equation}
\deg(p \cdot\Delta_X) = 2+ t(p)+22-\rho(X)=24 +t(p) -\rho(X) \end{equation}
while, if the automorphisms of $G$ are non-symplectic, 
\begin{equation} \deg(p \cdot\Delta_X)= 2 +t(p)\end{equation}
 where $t(p)$ is the trace of the action of $p$ on the 
finite dimensional vector space $NS(X) \otimes\Q$.
\end{cor}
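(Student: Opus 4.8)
The plan is to obtain both formulas as a direct application of Severi's formula (Theorem \ref{Theorem 3}), feeding in the valences determined in Theorem \ref{Theorem 2} together with the standard numerical invariants of a K3 surface. The only quantities that must be supplied are the two indices $\alpha(p)$ and $\beta(p)$, the Euler number $c_2(X)$, and a check that the hypothesis of Severi's formula is satisfied.

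First I would verify that Severi's formula is applicable, i.e.\ that $\Delta_X$ does not have valence $0$. Since $X$ is a K3 surface, $p_g(X)=1\neq 0$, so by the discussion in the introduction the valence of a correspondence on $X$ is unique whenever it exists. As $\Delta_X$ always has valence $-1$, uniqueness forces $v(\Delta_X)=-1\neq 0$, and the hypothesis holds. Next I would compute the indices of $p$. For the graph $\Gamma_g$ of an automorphism $g$, the first projection $\Gamma_g\to X$ is an isomorphism, so $\Gamma_g\cdot[P\times X]$ is the single reduced point $(P,g(P))$ and $\alpha(\Gamma_g)=1$; since $g$ is invertible the second projection $\Gamma_g\to X$ is an isomorphism as well, whence $\beta(\Gamma_g)=1$. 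Using the additivity of the indices and extending by $\Q$-linearity gives
$$\alpha(p)=\frac1n\sum_{g\in G}\alpha(\Gamma_g)=1,\qquad \beta(p)=\frac1n\sum_{g\in G}\beta(\Gamma_g)=1.$$

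Finally I would substitute into Severi's formula, using $c_2(X)=24$ for a K3 surface. In the symplectic case Theorem \ref{Theorem 2} gives $v(p)=-1$, so
$$\deg(p\cdot\Delta_X)=\alpha(p)+\beta(p)+t(p)+v(p)\bigl(2+\rho(X)-c_2(X)\bigr)=2+t(p)+\bigl(22-\rho(X)\bigr),$$
which is the first asserted identity $24+t(p)-\rho(X)$. In the non-symplectic case $v(p)=0$, so the last term vanishes and $\deg(p\cdot\Delta_X)=\alpha(p)+\beta(p)+t(p)=2+t(p)$, the second asserted identity. I do not expect any real obstacle: the argument is a substitution, and the only point requiring care is the index computation, where one must use that each $g$ is an automorphism—and not merely a morphism—to secure $\beta(\Gamma_g)=1$.
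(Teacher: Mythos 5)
Your proof is correct and follows exactly the paper's route: invoke the valences $v(p)=-1$ or $0$ from Theorem 3.3 and substitute into Severi's formula with $c_2(X)=24$ and indices $\alpha(p)=\beta(p)=1$. The only difference is that you spell out the index computation and the check that $\Delta_X$ does not have valence $0$, both of which the paper leaves implicit.
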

\begin {proof}
From Theorem 3.3  the projector $p$ has valence $v(p) =-1$ in the case $G$ consists of symplectic automorphisms while  $v(p)=0$ in the non-symplectic case.
Therefore everything follows from   Theorem \ref{Theorem 3}, because
$c_2(X)=24$ and the correspondence $p$ has  indices 1.
\end{proof}
\begin{ex} The following example shows how the equality in (3.7) can be used to recover some results, obtained by Nikulin in [Ni], for symplectic automorphisms of prime order on a K3 surface.\par
\noindent Let $X$ be a K3 surface with a symplectic automorphism $g$ of order a
prime number $l$.  Let $G$   be the group  of order $l$ generated  by $g$ and
 let $t(g)$ be the trace of the action of $g$ on the finite
dimensional vector space $NS(X)_{\Q}$. From Theorem 3.3
$g$ acts as the identity on $t_2(X)$ hence   the correspondence $\Gamma_g \in A^2(X
\times X)$ has valence -1.  Let $k$ be the number of isolated fixed
points of $g$.  All the fixed points $x \in X$, under the action of $G$, have the same order $l$,
because the stationary subgroup  $G_x$ of $x$, being  a cyclic subgroup of $G$, coincides with $G$. Therefore from (3.7) we get
\begin{equation} k = deg (\Gamma_g \cdot  \Delta_X) =  24 +t(g) - \rho(X) \end{equation}
i.e.   $t(g) = k +\rho(X) - 24$, for every $g \in G$, $g \ne 1$, where $t(g) =tr_{NS(X)}(g)$.  \par
 \noindent  Let    $p =(1/l ) \sum_{g \in G} \Gamma_{g}$ the projector in $A^2(X \times X)$. Since all automorphisms $g^i \in G$, with $i \ne 0$,  have the same number of fixed points we get
 $$ t(p)= tr_{NS(X)}(p ) =1/l(\rho(X) +(l-1)t(g)).$$
Let $M =(X,p) \in \sM_{rat}$ : then $h(X) = M \oplus N$, with $N =(X,1-p)$ and 
$$A^1(h(X)) = NS(X)_{\Q}= A^1(M) \oplus A^1(N).$$
where   $A^1(M)= p^*A^1(X)\simeq NS(X)^G_{\Q}$ and   $A^1(N)=(1-p)^* A^1(X)$. The projector $p$ acts as 0 on $A^1(N)$,hence
$$t(p) = tr_{NS(X)}(p )= tr_{NS(X)^G_{\Q}}(p) =s =1/l(\rho(X) + (l-1)t(g))$$
where $s =\dim_{\Q}(NS(X)^G_{\Q}$. Therefore 
\begin{equation} t(g) = {ls-\rho(X) \over l-1}\end{equation}
Here $s \ge 1$ because every automorphism  fixes a hyperplane section of $X$.\par 
\noindent Let $Y$ be a minimal resolution of the singular points  of $X/G$. Then $Y$ is a K3 surface and, by Theorem 3.3 ,  $h(X) \simeq h(Y)$. In particular 
$$A^1(h(X)) \simeq A^1(h(Y)) \simeq \L^{\oplus \rho(X)}.$$
 Let $\pi$ be the rational map obtained by composing the morphisms $X \to X/G$ and $Y \to X$.  $\pi$ is defined outside of a finite number of points in $X$. Therefore $\pi$ induces a map $NS(X)_{\Q} \to NS(Y)_{\Q}$ where  
$$ NS(X)_{\Q} = NS(X)_{\Q}^G  \oplus A^1(N)  \ ;  \  NS(Y)_{\Q} = NS(X)_{\Q}^G \oplus M_Y.$$
\noindent Here  $M_Y$ is the subvector space of $NS(Y)_{\Q}$ generated by the classes   of  the divisors  corresponding to  the curves obtained by resolving the singularities of $X/G$.  All the fixed points of $G$ on $X$ have order  $l$ and, for every  fixed point $x$,  the minimal resolution gives a curve consisting of $(l-1)$ rational curves.  Therefore $M_Y$ has dimension $k (l-1)$and we get
\begin{equation}\rho(X) =\rho(Y)= dim_{\Q} (NS(X)^G_{\Q}  + k(l-1) = s +k(l-1) \end{equation}
The equalities in (3.9), (3.10) and (3.11)     give 
$$ k = 24/l+1$$
We also have  $l \le 7$ because from (3.11)   $l=11$ implies  $\rho(X) \ge 21$ which is impossible. Note that, by a result of Nikulin  in [NI], the order of a symplectic automorphism is always $\le 8$. In [GS,Th.0.1] it is proved that a K3 surface cannot have a symplectic and a non- symplectic automorphism of the same order $l =7$.
  \end{ex}

 \section {Constant cycle curves on K3 surfaces}
  Let $X$  and $Y$ be  a smooth projective varieties over a field $k$  and let $h(X)$ and $h(Y)$ be their motives in $\sM_{rat}(k)$.
 If we pass to the (covariant) category $\sM^o_{rat}(k)$ of
birational motives ( see [KMP, 14,7,5]) the Lefschetz motive $\L$
goes to zero.  Writing  $\bar h(X)$ for the birational motive of a smooth
projective variety $X$ of dimension $d$, and $\Hom_{bir}$ for morphisms
in $\sM^o_{rat}(k)$, the fundamental formula is:
$$A_0(X_{k(Y)})=\varinjlim_{U\subset Y} A^d (U \times X)
                 \cong \Hom_{bir}(\bar h(Y), \bar h(X)).$$
 In particular, $A_0(X)\cong \Hom_{bir}(\un,\bar h(X))$ for all $X$. Let  $X$ be  a smooth projective surface, $C$ a smooth
projective curve. Let $h(C)=\un \oplus h_1(C) \oplus \L$ and 
 $h(X) =\un \oplus h_1(X) \oplus  h^{alg}_2(X)\oplus t_2(X) \oplus h_3(X)
\oplus \L^2 $ be   Chow-K\"unneth decomposition for the motives
of $X$and $C$.   Since the birational   motive of a curve is  $\un\oplus \bar h_1(C)$ and the birational motive of a surface  $X$ is $\un + \bar h_1(X) + \bar t_2(X)$
 we get 
 $$A_0(X_{k(C)}) \simeq A_0(X) \oplus \Hom_{bir}(\bar h_1(C), \bar h_1(X)) \oplus \Hom_{bir}(\bar h_1(C), \bar t_2(X))$$
 If $q(X)=0$ then $h_1(X)=h_3(X)=0$ . Moreover $h_1(C)$ and $t_2(X)$ are birational motives, see[KMP,148.8]. Therefore
\begin {equation}A_0(X_{k(C)})\cong  A_0(X) \oplus  \Hom_{\sM_{rat}}(h_1(C),t_2(X))\end{equation} 
\begin {rk} The isomorphism in (4.1) depends only on $k(C)$, hence it does not change if we replace $C$ with any other curve which is a smooth projective model of $k(C)$.\end{rk}  
 Let $X$ be a smooth projective surface with $q(X)=0$ and let  $C$ be a  smooth curve.   Consider the map
 $$A_1(C \times X) \to  \varinjlim_{U\subset C} A_1(U \times X)\simeq A_0(X_{k(C)}),$$
where $k(C) = k(\eta_C)$, with $\eta_C$  the generic point of $C$ and let
$$\Psi_{C,X} : A_1(C\times X) \to  {A_0(X_{k(C)}) \over A_0(X)} \simeq   \Hom_{\sM_{rat}}(h_1(C),t_2(X))$$
be the composite map .
 \begin{lm}\label{lem:Psi} Let $\sI(C,X)$ denote the subgroup of $A_1(C\times X)$ generated by
the classes of degenerate correspondences, i.e.  correspondences of the form $P\times D$ and $C\times Q$,
where $P\in C$ and $Q\in X$ are closed points and $D$ is a
curve on $X$. 

The map $\Psi_{C,X}$ induces an isomorphism
$${A_1(C\times X) \over \sI(C,X)} \simeq Hom_{\sM_{rat}}(h_1(C),t_2(X)).$$
  \end{lm}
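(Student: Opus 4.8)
The plan is to establish the isomorphism by showing that $\Psi_{C,X}$ is surjective and that its kernel is exactly $\sI(C,X)$; the two facts together give the induced isomorphism on the quotient. Surjectivity is the easy half: for every open $U\subset C$ the localization sequence makes the restriction $A_1(C\times X)\to A_1(U\times X)$ surjective, so the composite $A_1(C\times X)\to\varinjlim_{U} A_1(U\times X)\simeq A_0(X_{k(C)})$ is surjective, and hence so is its composition with the projection onto $A_0(X_{k(C)})/A_0(X)\simeq \Hom_{\sM_{rat}}(h_1(C),t_2(X))$.

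For the inclusion $\sI(C,X)\subseteq\ker\Psi_{C,X}$ I would treat the two families of generators separately. A cycle $C\times Q$ restricts on each $U\times X$ to $U\times Q$, whose class in $A_0(X_{k(C)})$ is the image of $[Q]\in A_0(X)$ under base change, so it dies in the quotient. A cycle $P\times D$ is supported on $\{P\}\times X$, hence its restriction to $U\times X$ vanishes as soon as $P\notin U$; its image in $\varinjlim_U A_1(U\times X)=A_0(X_{k(C)})$ is therefore $0$. So both types of generators lie in $\ker\Psi_{C,X}$.

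The substantive direction is $\ker\Psi_{C,X}\subseteq\sI(C,X)$, and here localization is the main tool. Let $\Gamma\in A_1(C\times X)$ with $\Psi_{C,X}(\Gamma)=0$. Then the image of $\Gamma$ in $A_0(X_{k(C)})$ lies in the base-changed subgroup $A_0(X)$, so it coincides with the image of some $\Gamma_0=\sum_j n_j\,(C\times Q_j)$, because that subgroup is spanned by the classes $[Q_{k(C)}]$. Since $\Gamma_0\in\sI(C,X)$, it suffices to prove $\Gamma-\Gamma_0\in\sI(C,X)$, and $\Gamma-\Gamma_0$ now maps to $0$ in $A_0(X_{k(C)})=\varinjlim_U A_1(U\times X)$, i.e. $(\Gamma-\Gamma_0)|_{U\times X}=0$ for some open $U$ with finite complement $Z=\{P_1,\dots,P_r\}$. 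The localization sequence
$$A_1(Z\times X)\xrightarrow{\,i_*\,}A_1(C\times X)\xrightarrow{\,j^*\,}A_1(U\times X)\to 0$$
then forces $\Gamma-\Gamma_0\in\Im(i_*)$. As $Z\times X=\coprod_i\{P_i\}\times X$ and, because $q(X)=0$, one has $A_1(\{P_i\}\times X)\simeq A_1(X)=NS(X)_{\Q}$ generated by divisor classes $[D]$, the image of $i_*$ is precisely the subgroup generated by the cycles $P_i\times D$. Hence $\Gamma-\Gamma_0\in\sI(C,X)$, and therefore $\Gamma\in\sI(C,X)$.

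The one point demanding care is the identification $\ker(j^*)=\Im(i_*)=\langle P_i\times D\rangle$. This rests on the continuity isomorphism $A_0(X_{k(C)})\simeq\varinjlim_U A_1(U\times X)$ and, crucially, on the hypothesis $q(X)=0$: the latter guarantees $A_1(X)=NS(X)_{\Q}$, so the image of $i_*$ is spanned exactly by the degenerate correspondences of type $P\times D$ with no spurious contribution from an $h_1(X)$-part. With $q(X)=0$ also ensuring that the decomposition (4.1) carries no $\Hom_{bir}(\bar h_1(C),\bar h_1(X))$ term, the target of $\Psi_{C,X}$ is indeed $\Hom_{\sM_{rat}}(h_1(C),t_2(X))$, completing the argument.
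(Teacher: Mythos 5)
Your argument is correct and is essentially the paper's own proof: both show $\sI(C,X)\subseteq\ker\Psi_{C,X}$ generator by generator, and for the converse both subtract a suitable $\sum_j n_j[C\times Q_j]$ so that the cycle dies in $\varinjlim_U A_1(U\times X)$ and then use localization to exhibit it as a combination of cycles $P_j\times D_j$. The only differences are cosmetic (you make surjectivity explicit where the paper leaves it implicit, and you skip the paper's preliminary normalization to homologically trivial cycles, which is not needed for the argument to close); note only that $q(X)=0$ is not actually what makes the image of $i_*$ be spanned by the $P_i\times D$ --- $A_1(X)$ with $\Q$-coefficients is generated by irreducible curves for any surface --- it is used rather in identifying the target of $\Psi_{C,X}$ with $\Hom_{\sM_{rat}}(h_1(C),t_2(X))$ via the decomposition (4.1).
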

\begin{proof} The homology class of every $Z \in A_1(C \times X)$ lies in $ H_2(C) \oplus H_2(X)_{alg}$, so that adding to $Z$ vertical and horizontal cycles,which belong to $\sI(C,X)$,  we get a cycle $Z' $ homologous to zero  such that $Z= Z ' \in A_1(C\times X)/\sI(C,X)$ . So we  may assume that $Z \in A_1(C\times X)_{hom}$. The correspondences $P\times D$ generate the kernel of $A_1(C\times X)\to A_0(X_{k(C)})$.  The correspondence $C\times Q$
maps to the image of $Q$ under $A_0(X)\to A_0(X_{k(C)})$. Therefore
$\sI(C,X)$ is in the kernel of the composition
$$
\Psi_{C,X}: A_1(C\times X) \to A_0(X_{k(C)}) 
\to Hom_{\sM_{rat}}(h_1(C),t_2(X))
$$
Conversely, let $Z \in A_1(C \times X)_{hom}$  such that its 
class $\Psi_{C,X}(Z)$ in $A_0(X_{k(C)})$ is in the subgroup $A_0(X)$.
Then there are closed points $Q_i$ of $X$ such that
$\Psi_{C,X}(Z)=\sum n_i \Psi_{C,X}(C\times Q_i)$. 
Subtracting $\sum n_i[C\times Q_i]$ from $Z$, we may assume 
that $\Psi_{C,X}(Z)=0$. But then $Z$ 
vanishes in $A_1(U\times X)$ for some $U=C-\{P_j\}$ and hence
$Z$ is a linear combination of cycles of the form $P_j\times D_j$.
\end{proof}
The following definition has been given in [Huy 2]
\begin {defn} 
Let  $X$ be a smooth projective surface over an algebraically closed field $k$ ,  $C$ an integral  curve on $X$ and let $f : C \to X$ be a closed immersion.  $C$ is called a {\it constant cycle curve} if  the class $[\eta_C]$ belongs to the image of $f_*:  A_0(X) \to A_0(X_{k(C)})$. Here  $\eta_C$ is the generic point of $C$ and $[\eta_C]$ is viewed as a closed point of the surface $X_{k(C)}$ over the field $k(C)$.
 \end{defn}
 \begin {rk} Every rational curve $C$ is a constant cycle curve. In the above definition, by eventually taking $k( \tilde C)= k(C)$, where $\tilde C$ is a desingularization of $C$, we  can assume $C $ to be smooth. If $X$ is defined over $\C$, then by [Huy 2, 3.7] a curve $f : C \to X$  is a constant cycle curve iff  the induced map $f_* : A_0(C)_0 \to A_0(X)_0$
vanishes.\par
\noindent  If $X$ is  a K3 surface over $\C$ then a  curve $C$ is a constant cycle curve iff any point on $C$ is rationally equivalent to $c_X$, see [Vois 3, 2.2].\par
\noindent For every K3 surface $X$ over an algebraically closed field $k$,  $c_X$  is  the distinguished class of degree one in $ A_0(X)$,  introduced in [BV]. $c_X$ is the class  of any closed point $P \in X$ lying on rational curve $R$.  Because for any irreducible curve $C$ on $X$ there is a rational curve $R \ne C$ which intersect $C$, we can represent $c_X$ by the class of a point $P \in C$, namely any point of $C \cap R$. 
\end{rk}
The following result gives a condition on the motives of $C$ and $X$ in order  for $C$ to be constant cycle curve, in the case of a surface  $X$ with $q(X)=0$. 
\begin {thm}\label{Theorem 4} 
Let $X$ be a smooth projective surface over a field $k$,
with  $q(X)=0$, and let $f : C \to X$  be a   curve on $X$. The following conditions are equivalent\par
 (i) $C$ is a constant cycle curve;\par
 (ii) The map $f : C \to X$ induces the 0- map $f_*$ in 
$Hom_{\sM_{rat}}(h_1(C),t_2(X))$.
\end{thm}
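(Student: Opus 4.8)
The plan is to express both conditions in terms of the single class $\Psi_{C,X}(\Gamma_f)$, where $\Gamma_f\in A_1(C\times X)$ is the graph of $f$, and then to read off the equivalence from the splitting $A_0(X_{k(C)})\cong A_0(X)\oplus\Hom_{\sM_{rat}}(h_1(C),t_2(X))$.

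First I would follow the graph $\Gamma_f$ through the localization map $A_1(C\times X)\to A_0(X_{k(C)})$ of Lemma \ref{lem:Psi}. Restricting $\Gamma_f$ to the generic fibre $\{\eta_C\}\times X=X_{k(C)}$ of the first projection produces exactly the $k(C)$-rational point $f(\eta_C)$, that is, the class $[\eta_C]$ appearing in the definition of a constant cycle curve. Hence $\Gamma_f$ maps to $[\eta_C]$, and $\Psi_{C,X}(\Gamma_f)$ is by construction the image of $[\eta_C]$ in the quotient $A_0(X_{k(C)})/A_0(X)\simeq\Hom_{\sM_{rat}}(h_1(C),t_2(X))$.

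Next I would identify $\Psi_{C,X}(\Gamma_f)$ with the morphism $f_*$ of condition (ii). Viewing $\Gamma_f$ as an element of $A_1(C\times X)=\Hom_{\sM_{rat}}(h(C),h(X))$, composition with the Chow--K\"unneth projectors $\pi_1(C)$ on the source and $\pi^{tr}_2(X)$ on the target extracts the component $h_1(C)\to t_2(X)$, which is precisely the morphism $f_*$ of (ii); and under the isomorphism of Lemma \ref{lem:Psi} this composite $\pi^{tr}_2(X)\circ\Gamma_f\circ\pi_1(C)$ is exactly $\Psi_{C,X}(\Gamma_f)$, in complete analogy with the map $\Psi_{X,Y}$ of Section 2. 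Thus $\Psi_{C,X}(\Gamma_f)=f_*$ in $\Hom_{\sM_{rat}}(h_1(C),t_2(X))$.

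Finally I would invoke the splitting $A_0(X_{k(C)})\cong A_0(X)\oplus\Hom_{\sM_{rat}}(h_1(C),t_2(X))$, under which the quotient map onto $\Hom_{\sM_{rat}}(h_1(C),t_2(X))$ is projection onto the second summand while the first summand is the image of the natural map $A_0(X)\to A_0(X_{k(C)})$. Consequently $[\eta_C]$ lies in the image of $A_0(X)$ if and only if its projection to the second factor, namely $\Psi_{C,X}(\Gamma_f)=f_*$, vanishes. Chaining the identifications gives that $C$ is a constant cycle curve $\iff[\eta_C]\in\Im\bigl(A_0(X)\to A_0(X_{k(C)})\bigr)\iff f_*=0$ in $\Hom_{\sM_{rat}}(h_1(C),t_2(X))$, which is the asserted equivalence. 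The one point requiring genuine care is the second step: one must verify that restricting the graph to the generic fibre and then killing $A_0(X)$ indeed computes the motivic component $\pi^{tr}_2(X)\circ\Gamma_f\circ\pi_1(C)$, but this is exactly what Lemma \ref{lem:Psi} supplies, so once that lemma is available the remaining steps are formal.
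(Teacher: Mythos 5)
Your proposal is correct and takes essentially the same route as the paper: both track the graph $\Gamma_f=f_*(\Delta_C)$ through the map $\Psi_{C,X}$ to see that its image in $A_0(X_{k(C)})/A_0(X)$ is the class $[\eta_C]$, and both then invoke Lemma \ref{lem:Psi} to identify that quotient class with the component $f_*\in\Hom_{\sM_{rat}}(h_1(C),t_2(X))$. The only cosmetic difference is that you phrase the final step via the splitting (4.1) while the paper uses a commutative diagram, but the content is identical.
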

\begin {proof} From the commutative diagram 
$$ \CD
 A_1(C \times C)@>{f_*}>>A_1(C \times X)  \\
 @VVV   @V{\Psi_{C,X}}VV    \\
{ A_0(C_{\eta_C})\over A_0(C)} @>{\bar f_*}>>{A_0(X_{k(C)})\over A_0(X)} \\
    \endCD$$
where $\bar f_*([\eta_C]) = [\eta_C] \in A_0(X_{k(C)})$, we get $\Psi_{C,X}(f_*(\Delta_C)) =[\eta_C] $. Therefore $[\eta_C] =0$ in ${A_0(X_{k(C)})\over A_0(X)}$
iff $f_*(\Delta_C) \in \sI(C,X)$.  From Lemma 4.3 , it follows that $f_*(\Delta_C) \in \sI(C,X)$ iff 
the map induced by $f$ in $\Hom_{\sM_{rat}}(h_1(C), t_2(X))$ vanishes. Therefore $(i) \Leftrightarrow (ii)$.\par
\end{proof}
 \begin {cor} Let $X$ be a complex K3 surface with a symplectic automorphism $g$ of finite order $n$. Let $Y$ be the minimal desingularization of $X/g$. Then  there is i a 1-1 correspondence between constant cycle curves on $X$ fixed by $g$ and constant cycle curves on   $Y$.\end{cor}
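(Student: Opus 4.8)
The plan is to combine the motivic characterisation of constant cycle curves in Theorem~\ref{Theorem 4} with the isomorphism of transcendental motives produced by Theorem~\ref{Theorem 2}. Write $G=\langle g\rangle$, a group of order $n$ of symplectic automorphisms of $X$, and let $\phi\colon X\to Y$ be the rational map induced by $f\colon X\to X/G$ and the desingularisation $\pi\colon Y\to X/G$; it is defined away from the finite set of isolated fixed points of $G$. By Theorem~\ref{Theorem 2}(i), $Y$ is again a K3 surface, so $q(Y)=0$ and Theorem~\ref{Theorem 4} applies to both $X$ and $Y$; moreover $\phi$ induces an isomorphism $h(X)\simeq h(Y)$, and in particular an isomorphism $\Gamma=\Psi_{X,Y}(\Gamma_\phi)\colon t_2(X)\iso t_2(Y)$.

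First I would set up the correspondence on curves. Since every nontrivial power of $g$ is again symplectic, and hence has only isolated fixed points, no nontrivial element of $G$ fixes an irreducible curve pointwise; thus for an irreducible curve $C$ with $g(C)=C$ the group $G$ acts faithfully on $C$, and, writing $C'=\phi(C)\subset Y$, the restriction $\psi=\phi|_C\colon C\to C'$ is a finite morphism of degree $n$. Replacing $C$ and $C'$ by their smooth projective models (which by Remark 4.2 does not affect the relevant $\Hom$-groups and is permitted by Remark 4.5) I may assume both smooth. From $\psi_*\circ\psi^*=n\cdot\mathrm{id}_{h_1(C')}$ it follows that $\psi_*\colon h_1(C)\to h_1(C')$ is split surjective. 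Conversely the reduced preimage $\phi^{-1}(C')$ of an irreducible curve $C'$ on $Y$ is $G$-invariant, and the assignments $C\mapsto\phi(C)$ and $C'\mapsto\phi^{-1}(C')$ are mutually inverse on $g$-fixed irreducible curves.

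The key step is the commutative square coming from the identity $\phi\circ f=f'\circ\psi$ of morphisms $C\to Y$, where $f\colon C\hookrightarrow X$ and $f'\colon C'\hookrightarrow Y$ are the inclusions. Applying the functoriality of $\Psi$ to the corresponding correspondences gives
$$\Gamma\circ f_*=f'_*\circ\psi_*\qquad\text{in }\Hom_{\sM_{rat}}(h_1(C),t_2(Y)).$$
If $C$ is a constant cycle curve then $f_*=0$ by Theorem~\ref{Theorem 4}, whence $f'_*\circ\psi_*=0$; since $\psi_*$ is surjective, $f'_*=0$, so $C'$ is a constant cycle curve. If $C'$ is a constant cycle curve then $f'_*=0$, whence $\Gamma\circ f_*=0$; since $\Gamma$ is an isomorphism, $f_*=0$, so $C$ is a constant cycle curve. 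Together with the matching of curves above this yields the asserted $1$-$1$ correspondence.

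The hard part will be justifying the commutative square, that is the functoriality $\Gamma\circ f_*=f'_*\circ\psi_*$: one must check that $\Gamma_\phi\circ\Gamma_f=\Gamma_{f'}\circ\Gamma_\psi$ as correspondences and that $\Psi$ carries this relation to the displayed one. The only delicacy here is the finite indeterminacy locus of $\phi$, which is harmless because it has codimension two in $X$ while $\phi\circ f$ already extends to an honest morphism on the smooth curve $C$. A secondary point deserving care is to confirm that passing to reduced preimages matches constant cycle curves on $Y$ with $g$-fixed constant cycle curves on $X$: one notes that the exceptional curves of $\pi$ are rational, hence automatically constant cycle curves by Remark 4.5, and arise from the isolated fixed points of $G$ rather than from any $g$-fixed curve on $X$.
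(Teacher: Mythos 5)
Your proof is correct, and while it rests on the same two pillars as the paper's --- the motivic criterion of Theorem 4.6 and the isomorphism $t_2(X)\simeq t_2(Y)$ from Theorem 3.3(i) --- it reaches the converse by a genuinely different route. Your forward direction is essentially the paper's (push $f_*=0$ forward through $\Psi_{X,Y}(\Gamma_\phi)$), sharpened by the observation that $\psi_*\colon h_1(C)\to h_1(C')$ is split epi because $\psi_*\circ\psi^*=n\cdot\mathrm{id}$. The real divergence is in the converse: you cancel the isomorphism $\Gamma\colon t_2(X)\iso t_2(Y)$ on the left of the single commutative square $\Gamma\circ f_*=f'_*\circ\psi_*$, so both implications become formal consequences of one diagram, with the needed inputs isolated as ``$\psi_*$ epi'' and ``$\Gamma$ mono.'' The paper instead descends to $0$-cycles: it uses the averaging identity $\pi^*\circ\pi_*=\sum_{h\in G}h_*$ on $A_0(X_{k(C)})/A_0(X)$, the fact (via Theorem 3.3 and Huybrechts) that $g_*[\eta_C]=[\eta_C]$ to get $\pi^*\pi_*[\eta_C]=n[\eta_C]$, and then concludes from $[\eta_D]=0$ and divisibility in a $\Q$-vector space. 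Your version is more uniform and makes the logical dependencies explicit; the paper's stays closer to the definition of a constant cycle curve in terms of the class $[\eta_C]$ and avoids having to discuss the map on $h_1$ of the curves at all. The points you flag as delicate (extending $\phi\circ f$ over the finite indeterminacy locus, and compatibility of $\Psi$ with composition modulo degenerate correspondences) are real but routine, and correspond to the paper's appeal to the birational invariance of $t_2$ and $h_1$; likewise your caveat about exceptional curves of $Y\to X/G$ and about reducible preimages concerns the literal wording of the ``1--1 correspondence,'' a looseness the paper's own proof shares, since its converse only treats curves $D\subset Y$ that are images of $g$-invariant curves on $X$.
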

 \begin{proof} Let $f : C \to X$ be a constant cycle curve  on $X$. The maps $\pi: X \to X/g$  and $ Y \to X/g$ yield a rational map $X \to Y$, which is defined outside a finite number of points on $X$. $t_2(X)$ and $h_1(C)$ are birational invariants, hence we may blow up $X/g$ to assume that  $X/g =Y$ and $\pi : X \to Y$.  Then $Y$ is a K3 surface and the finite map $\pi$ induces an isomorphism between $t_2(X) $ and $t_2(Y)$, as in the proof of  Theorem   3.3. From Theorem  4.6 the map $f_* \in \Hom_{\sM_{rat}}(h_1(C),t_2(X))$ vanishes, hence also $ \tilde f _* =0$, with $\tilde f = \pi \circ f $.  Therefore  $\tilde f : C \to Y$ is a constant cycle curve.\par
 \noindent   Conversely let $D\subset Y$ be a constant cycle curve which is the image of a curve $C$ on $X$ fixed by $g$.  
The map $\pi$ induces  group homomorphisms  
$$ \pi_* :   A_0(X_{k(C)}) /A_0(X)  \to  A_0(Y_{k(C)})/A_0(Y);$$
$$ \pi^* :   A_0(Y_{k(D)}) /A_0(Y)  \to  A_0(X_{k(C)})/A_0(X).$$
Let $G$ be  he cyclic group generated by $g$. Then  $(\pi^* \circ \pi_*)(\alpha) =  \sum_{g \in G} g_*(\alpha)$ for every  $\alpha \in A_0(X_{k(C)}) /A_0(X)$.  From Corollary 2.4 and Theorem 3.3 we get the following isomorphisms   
$$ A_0(X_{k(X})/ A_0(X)  \simeq  \End_{\sM_{rat}}(t_2(X))\simeq \End_{\sM_{rat}}(t_2(Y)) \simeq A_0(Y_{k(Y)}/ A_0(Y)  $$
 where the class of the generic point $\xi$  of $X$, which corresponds to the identity map of $t_2(X)$, is mapped to the   class  $[\zeta]$ of the generic point $\zeta$ of $Y$. 
 Therefore we get  a commutative diagram 
 $$ \CD {A_0(X_{k(X})\over A_0(X)}@>{\simeq}>>{A_0(Y_{k(Y)}\over A_0(Y) }\\
  @VVV                      @VVV    \\
  {A_0(X_{k(C)}) \over A_0(X)}@>{\pi_*}>> {A_0(Y_{k(D)}) \over A_0(D)}  \endCD $$
 where the vertical maps are induced by  the specialization maps\par
 \noindent   $A_0(X_{k(X)}) \to A_0(X_{k(C)})$ and  $A_0(Y_{k(Y)}) \to A_0(X_{k(D)})$, which send  $[\xi]$ to $[\eta_C]$ and $[\zeta$ to $[\eta_D]$. Therefore $\pi_*([\eta_C] =[\eta_D]$.
 As $g$ acts as the identity on $t_2(X)$,  we get $g_*([\xi]) =[\xi] $. The specialization map  sends $[\xi] $ to $[\eta_C]$. Therefore $g_*([\eta_C] = [\eta_C]$ in $A_0(X_{k(C)})/A_0(X)$, and this  implies 
$$(\pi^* \circ \pi_*)([\eta_C]) = \sum_{g \in G} g_*([\eta_C])=n [\eta_C] \in   A_0(X_{k(C)})/ A_0(X).$$
 $D$ being a constant cycle curve on $Y$, $[\eta_D]$ is in the image of the map $A_0(Y) \to A_0(Y_{k(D)})$, and hence $\pi_*([\eta_C] )=[\eta_D] =0$ in $ A_0(Y_{k(D)})/ A_0(D)  $.
  It follows that 
  $$n[\eta_C] =  (\pi^* \circ \pi_*)([\eta_C]) =\pi^*([\eta_D]) = 0$$
  in $  A_0(X_{k(C)})/ A_0(X)$. Therefore  $C$ is a constant cycle curve. 
 \end{proof}
\begin{ex} Let $X$ be a  complex K3 surface, with $\rho(X)=9$, which is the intersection of 3 quadrics in $\P^5$,  having a  symplectic involution $\sigma$, as described in [VG-S,3.5]. The desingularization $Y$ of $X/\sigma$ is a quartic surface in $\P^3$, and $t_2(X) =t_2(Y)$, by Theorem 3.3.  Let $\beta: \tilde X \to X$ be the blow up at the 8 fixed points of $\sigma$ and let $C =(1/2) \sum_{1 \le i \le 8} C_i$  be the corresponding divisor on $Y$, with $C_i$ a rational curve. Let \par
\noindent  $f : \tilde X \to Y$ and   let $L \in NS(X)$, with $L^2 =8$, be the line bundle  that gives the embedding $\Phi_L:  X \to \P^5$. There is a line bundle $M \in NS(Y)$ such that $\beta^*L =f^*M$, with $M^2=4$ and $h^0(M-C)=2$. Note that $ NS(Y)\simeq NS(X)^{\sigma} \oplus \{[C_1],\cdots  [C_8]\}$, where $NS(X)^{\sigma}$ has rank  1, because $\rho(X) =\rho(Y) = 9$. Let $ \mu = \Phi_{M-C} : Y \to \P^1$ be the map associated to the pencil $| M-C |$. $Y \to \P^1 $ is an elliptic fibration, because $(M-C)^2=0$ implies that all  curves in the linear system have genus 1. Let $C_0 \subset Y$ be the 0-section  of $\mu$ and let $C_n$ be the closure of the set of $n$-torsion points on the smooth fibres $Y_t$, $t \in \P^1$. Then, by the results in [Huy 2, 6.2],  $C_n$ is a constant cycle curve. From Corollary 4.7  every $C_n$ pullbacks to a constant cycle curve on $X$. \end{ex}
 \noindent In [Huy 2,7.1] it is proved that, for a complex K3 surface $X$, every fixed curve of a non -symplectic automorphism $g$ of finite order is a constant cycle curve.  The following theorem extends Huybrecht's result to every correspondence $ \Gamma \in A^2(X \times X)$  on a  K3 surface  $X$,  such that $\Gamma$ has a valence $ \ne -1$.
\begin {thm} Let $X$ be a  complex  K3 surface and let  $\Gamma  \in A^2(X \times X)$. Suppose that $\Gamma$ has a valence $v(\Gamma)$, with $v(\Gamma) \ne -1$.  If $f : C \to X$
is a curve on $X$ such that $\Gamma_*([x])=[x]$ in $A_0(X)$, for every $x \in C$, then $C$ is a constant cycle curve.
\end{thm}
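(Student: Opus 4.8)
The strategy is to sidestep the motivic criterion of Theorem 4.6 and instead read off the action of $\Gamma$ on the Albanese kernel $T(X)=A_0(X)_0=A_0(t_2(X))$ directly from its valence. First I would record that every degenerate correspondence acts as zero on $T(X)$: by Corollary 2.4 the map $\Psi_X$ identifies $A^2(X\times X)/\sI(X)$ with $\End_{\sM_{rat}}(t_2(X))$, so any $Z\in\sI(X)$ induces the zero endomorphism of $t_2(X)$; since for $\alpha\in A_0(X)_0$ the cycle $Z_*(\alpha)$ again has degree zero and its $t_2$-component is $\Psi_X(Z)_*(\alpha)$, it follows that $Z_*$ vanishes on $A_0(t_2(X))=A_0(X)_0$. (It is this point that secretly uses $q(X)=0$: a multisection supported on $X\times D$ produces, for a degree-zero source cycle, an algebraic family of fibres in the Jacobian $J(D)$ parametrized by $X$, and this family is constant because $\operatorname{Alb}(X)=0$.) Writing $v=v(\Gamma)$ and using $\Gamma+v\Delta_X\in\sI(X)$ together with $(\Delta_X)_*=\mathrm{id}$, I conclude that $\Gamma_*=-v\cdot\mathrm{id}$ on $T(X)$.

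Next I would fix closed points $x,x'\in C$. Since $[x]$, $[x']$ and $c_X$ all have degree one, the differences $[x]-c_X$ and $[x']-c_X$ lie in $T(X)$, so
$$\Gamma_*([x]-c_X)=-v\,([x]-c_X),\qquad \Gamma_*([x']-c_X)=-v\,([x']-c_X).$$
Feeding in the hypothesis $\Gamma_*([x])=[x]$ and $\Gamma_*([x'])=[x']$ and subtracting cancels the unknown term $\Gamma_*(c_X)$ and yields
$$(1+v)\,\big([x]-[x']\big)=0 \quad\text{in } A_0(X).$$
As $v\neq -1$ and $A_0(X)$ is a $\Q$-vector space, this forces $[x]=[x']$; that is, all closed points of $C$ are rationally equivalent on $X$.

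To finish I would invoke Remark 4.5: for a complex K3 surface a curve is a constant cycle curve exactly when each of its points is rationally equivalent to the distinguished class $c_X$, and moreover $c_X$ is represented by some point $P_0\in C$ (a point of $C\cap R$ for a rational curve $R$ meeting $C$). Combined with the previous step, $[x]=[P_0]=c_X$ for every $x\in C$, so $C$ is a constant cycle curve. The assumption $v\neq -1$ is essential, and is precisely what fails for the fixed curves of symplectic automorphisms, where $v=-1$; the delicate step is the first one, where the passage from $\sI(X)$ to the zero action on $T(X)$ rests on $q(X)=0$. An alternative, in the spirit of Theorem 4.6, would be to compute $\Psi_{C,X}(\Gamma\circ\Gamma_f)=-v\,f_*$ by functoriality and match it against $f_*$, forcing $(1+v)f_*=0$; but identifying $\Psi_{C,X}(\Gamma\circ\Gamma_f)$ with $f_*$ requires comparing the generic point of $C$ with its closed points, which is exactly what the $T(X)$-argument circumvents.
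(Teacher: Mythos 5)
Your proposal is correct and follows essentially the same route as the paper: both arguments reduce to showing that $\Gamma_{*}=-v(\Gamma)\cdot\mathrm{id}$ on $A_0(X)_0$ (the paper via the identification of $\Psi_X(\Gamma)$ with $-v(\Gamma)[\xi]$ in $A_0(X_{k(X)})/A_0(X)$, you via the equivalent observation that $\sI(X)$ acts as zero on the Albanese kernel), and then both combine the hypothesis with the fact from Remark 4.5 that $c_X$ is represented by a point of $C\cap R$ to conclude $[x]=c_X$ for all $x\in C$. The only cosmetic difference is that you first derive $[x]=[x']$ for all pairs of points before anchoring to $c_X$, whereas the paper applies the hypothesis directly to $\alpha=[x]-c_X$ with $c_X=[y]$, $y\in C$.
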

\begin {proof} From the isomorphism $\Hom_{\sM_{rat}}(\un,t_2(X)) \simeq A_0(X)_0$ it follows that every  0-cycle   $\alpha$ of degree 0 corresponds to a map $t_{\alpha}: \un \to t_2(X)$. The action of the correspondence $\Gamma$ on $\alpha \in A_0(X)_0$  coincides with the composition map $\bar \Gamma \circ t_{\alpha} : \un \to t_2(X) \to t_2(X)$, where
$$ \bar \Gamma =\Psi_X(\Gamma ) \in \End_{\sM_{rat}}(t_2(X) ) \simeq  {A_0(X_{k(X)}) \over  A_0(X)} .$$
The map $\bar \Gamma$ corresponds, in the isomorphism above, to the cycle class $\Gamma([\xi])= -v(\Gamma)[\xi] $, because $\Gamma +v(\Gamma)\Delta_X \in \sI(X)$. Here $\xi$ is the generic point of $X$ and the class  $[\xi]$ corresponds to the identity map of $t_2(X)$. Therefore $\bar \Gamma \circ t_{\alpha}$ is multiplication by $ -v(\Gamma)$ and 
$\Gamma_*(\alpha) = -v(\Gamma) \alpha$ in  $ A_0(X)_0$. Now let $x$ be any point on $C$ and let $y \in C$ be such that $[y] =c_X$ in $A_0(X)$. Then $\Gamma_*([x]) =[x]$ and $\Gamma_*([y]) = [y]=c_X$. Therefore $\Gamma_*(\alpha) =\alpha$,
with $\alpha =[x] -c_X \in A_0(X)_0$. We get $( v(\Gamma)+1)\alpha=0$ and hence $\alpha =0$, because $v(\Gamma)\ne -1$. This proves that, for every point $x \in C$ the 0-cycle $[x] -c_X$ vanishes in $A_0(X)_0$, i.e $C$ is a constant cycle curve.
 \end{proof}
\begin {ex}  (1) The result In Theorem 4.8  applies to the case of the fixed locus  of the {\it bitangent correspondence}, considered in  [Huy 2]. Let $X \subset \P^3$ be a smooth quartic not containing  a line. Let $x$ a generic point on $X$ and let $C_x =T_x X \cap X$. There are six lines passing trough $x$ and such  that they are bitangent to  $C_x $ at some other points
 $y_{x,1},\cdots y_{x,6} \in C_x$. The points $\{y_{x,1}\cdots y_{x,6} \}$ are rationally equivalent in $A_0(X)$. Let $\Gamma \in A^2(X \times X)$  be  the  bitangent correspondence 
 $x \to \{y_{x,1}\cdots y_{x,6} \} $. Then $\Gamma \circ \Gamma =\Delta_X$, because if $l_x$ is a bitangent trough $x$ and $y_x$ then $l_x=l_y$ is also the bitangent trough $y=y_x$ and $x_y =x$.  
Therefore $\Gamma_*$ acts as involution on $A_0(X)$. By [Huy 2,8.1] $\Gamma_*$ acts as -1 on $A_0(X)_0$, and hence  $\Gamma_*([\xi]) =- [\xi]$ in $A_0(X_{k(X)})/A_0(X) \simeq \End_{\sM_{rat}} (t_2( X))$. Therefore $\Gamma + \Delta_X \in \sI(X)$, i.e.  the correspondence $\Gamma$ has  a valence $v(\Gamma) =1$. From theorem 4.8 the curve $C$ of contact points of hyperflexes, which is  the fixed locus of $\Gamma$, is a constant cycle curve.\par
\noindent (2) Let $\sigma $ be an involution on a K3 surface  $X$ such that  $t_2(Y) =0$, where $Y $ is the desingularization of $X/\sigma$. Then, by  [Ped 2, Theorem 1],  $v(\Gamma_{\sigma}) =1$,  and hence every curve in the fixed locus of $\sigma $ is a constant cycle curve.  In the case $X$ is a double cover of $\P^2$ branched over a sextic curve $C \subset X$, then $C$ is a constant cycle curve,because $t_2(\P^2) =0$. If   $X$  is a double cover of a quadric $Q \simeq \P^1 \times \P^1$ in $\P^3$, as in [VG-S, 3.5], then  double cover is ramified on a curve $B$ of bidegree$(4,4)$ and the branch curve of the covering involution $\sigma$ on $X$ has genus 9. $C$ is a constant cycle curve,  because $t_2(Q)=0$ implies that $v(\Gamma_{\sigma})=1$.
\end{ex}
  
   \section {Families of K3 surfaces and the generalized  Franchetta's conjecture}
 Franchetta's conjecture on  line bundles over  the universal  curve  on the moduli space of curves of genus $g$, which has been proved in [AC], may be stated as follows.
 \begin {conj} (Franchetta's conjecture) Let $\sC$ be the universal curve over the function field of  the moduli space $\sM_g$ of curves of genus $g$. Then any line bundle on $\sC$ is the integral multiple of the canonical bundle,i.e.
 \begin {equation} \Pic (\sC/ \sM_g) \simeq \Z\omega_{\sC} / \sM_g \end{equation}  
 \end{conj}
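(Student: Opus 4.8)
The plan is to identify $\Pic(\sC/\sM_g)$ with the group of $K$-rational line bundles on the generic fibre $C_\eta$ over $K=k(\sM_g)$, modulo pullbacks, and to show this group is freely generated by the canonical class $\omega_{C_\eta}$. The cleanest route, and the one behind the cited reference [AC], is to recognize the universal curve $\sC$ as the moduli space $\sM_{g,1}$ of one-pointed curves and to compute its Picard group. By Harer's theorem (for $g\ge 3$) the integral cohomology $H^{2}(\sM_{g,1};\Z)$ is free of rank two, generated by the Hodge class $\lambda=c_1(\det\pi_*\omega)$ and the class $\psi=c_1(\omega_{\sC/\sM_g})$ of the relative dualizing sheaf. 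Since $H^{1}(\sM_{g,1};\Q)=0$ and $H^{2}$ is generated by Hodge classes of type $(1,1)$, the cycle class map identifies $\Pic(\sM_{g,1})$ with $H^{2}(\sM_{g,1};\Z)=\Z\lambda\oplus\Z\psi$. As $\lambda$ is pulled back from the base, the quotient $\Pic(\sC)/\pi^{*}\Pic(\sM_g)$ is the free group on $\psi$; restricting to the generic fibre and noting $\psi|_{C_\eta}=\omega_{C_\eta}$ yields the asserted isomorphism.

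A more self-contained argument for the geometrically essential half---that there is no nontrivial $K$-rational line bundle of degree $0$---goes through monodromy. Spreading $C_\eta$ out to a family $\sC_U\to U$ over an open $U\subset\sM_g$, a degree-$0$ line bundle becomes a section of the Jacobian abelian scheme $\sJ\to U$, so the group in question is $\sJ(K)$. The geometric monodromy is the full symplectic group $\mathrm{Sp}(2g,\Z)$ acting on $H_1$ of the fibre, so there are no nonzero invariant cycles; by the theorem of the fixed part $\sJ(K)$ is finitely generated of rank equal to the dimension of the fixed part, namely $0$, hence torsion. A torsion section is an invariant element of $\sJ[n]\simeq(\Z/n)^{2g}$, and $\mathrm{Sp}(2g,\Z)\twoheadrightarrow\mathrm{Sp}(2g,\Z/n)$ fixes no nonzero vector, so $\sJ(K)=0$. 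Combined with the fact that $\omega$ has degree $2g-2$, this reduces the full statement to computing the image of the degree map $\deg\colon\Pic(C_\eta)\to\Z$.

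The hard part is the integral statement, as opposed to the rational one. Over $\Q$ the result follows at once from Harer's computations $H^{2}(\sM_g;\Q)=\Q\lambda$ and $H^{2}(\sM_{g,1};\Q)=\Q\lambda\oplus\Q\psi$, but two integral refinements are needed: that $H^{2}$ is torsion-free, so that $\Pic(\sC/\sM_g)$ is exactly $\Z\psi$ with no finite-order ambiguity, and---equivalently on the generic fibre---that the image of $\deg$ is precisely $(2g-2)\Z$ rather than $m\Z$ for a proper divisor $m\mid 2g-2$, i.e.\ that the index of the generic curve equals $2g-2$. This is exactly the point at which Harer's integral cohomology computation, or an equivalent index estimate, is indispensable; for small genus one can instead compute the Picard group of the generic member from an explicit canonical or plane model, but the verification that no line bundle of smaller positive degree is $K$-rational still rests on the same index input.
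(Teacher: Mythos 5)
The paper does not prove this statement: Conjecture 5.1 is recalled as a known theorem, attributed in the surrounding text to Arbarello--Cornalba [AC], and no argument for it appears anywhere in the paper. So there is no internal proof to compare against; what can be said is that your outline is essentially a reconstruction of the proof in the cited reference [AC], together with a complementary monodromy argument. The first paragraph (identify $\sC$ with $\sM_{g,1}$, use Harer's computation of $H^2$ of the mapping class group to get $\Pic(\sM_{g,1})=\Z\lambda\oplus\Z\psi$, quotient by $\pi^*\Pic(\sM_g)=\Z\lambda$, restrict to the generic fibre) is exactly the Arbarello--Cornalba strategy. The second paragraph (big monodromy $\mathrm{Sp}(2g,\Z)$ kills $\sJ(K)$, including torsion via surjectivity onto $\mathrm{Sp}(2g,\Z/n)$) is a correct and standard alternative way to handle the degree-zero part, and your third paragraph correctly isolates where the genuinely hard integral input lies, namely that the index of the generic curve is exactly $2g-2$ and not a proper divisor of it; this cannot be extracted from the rational cohomology alone and does rest on the integral computation (or an equivalent index bound). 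Two small points you should not gloss over if this were written out in full: the passage from $\Pic(\sM_{g,1})/\pi^*\Pic(\sM_g)$ to $\Pic(C_\eta)$ uses that every vertical divisor is a pullback because the fibres are irreducible, and that working integrally one should phrase everything on the moduli stack (or restrict to the automorphism-free locus) to avoid Weil-versus-Cartier issues on the coarse space; and Harer's original theorem covers $g\ge 5$, so the low-genus cases $g=3,4$ need the later refinements that [AC] supply. Neither is a gap in the idea, only in the bookkeeping.
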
 
  \noindent  O.Grady in  [O'Gr,5.3] has asked the following question, which is similar to Franchetta's conjecture,  for the universal family of K3 surfaces.
 \begin{ques}(Generalized Franchetta's conjecture)  Let $g\ge3$ and let $ \sK_g$ be the moduli space of complex  K3 surfaces with a polarization of degree $(2g-2)$.  
 By restricting to the open  dense subset $S_g =\sK^0_g$ parametrising polarized K3 surfaces with trivial automorphism group,  we may assume that the  family  $f : \sX_g \to S_g $ is smooth.  Let $\sZ \in \CH^2(\sX)$. Is it true that $\sZ \vert X_s \in \Z c_{X_s}$ for all closed points $s \in S_g$?
\end{ques}
 
 \noindent   Let $f : \sX \to S $ be a   projective family of  surfaces over an algebraically closed field $k$ of characteristic 0,   i.e.  $f$ is  projective, and the fibres are  surfaces. Let $S$ be smooth of dimension $n$  and let  $U =S -Y$, with $Y$ closed in  $S$. Let  $i : Y \to S$ and  $j :  U \to S$  be the inclusions.  Let $ \bar \sX  = \sX \times_S Y$, $\sX_U = \sX \times _S U$. Then there is  an exact sequence
\begin{equation}  \CD \CH_n(\bar \sX) @>{i_*}>>\CH^2 ( \sX )@>{j^*}>>\CH^2( \sX_U )@>>>O \\ \endCD \end{equation}
where $i_*$ and $j^*$ denote  push-forward and pull-back induced by the inclusions $i$ and $j$, see [Fu, 20.3].  
\noindent  There also is a Gysin homomorphism  
$$i^{!} : \CH^2(  \sX) \to \CH^2(\bar \sX)$$
and a specialization map $\sigma: \CH^2(\sX_U)\to \CH^2(\bar \sX)$, such that $\sigma(j^*(\alpha)) = i^{!}(\alpha)$ for all $\alpha\in A^2(\sX)$. 
 Let $X_{\eta}$ be the generic fibre of $f$. Then
 \begin{equation} \lim_{U \subset S} CH^2(\sX_U)= \lim_{U \subset S} \CH^2(\sX \times_S U) \simeq \CH^2(\sX_{k(S)})=\CH^2(X_{\eta})\end{equation} 
where $k(S)$ is the quotient field of $S$ and $\eta$ the generic point of $S$.\par
\noindent The following result gives a condition equivalent to the  generalized Franchetta's conjecture. 

 \begin{thm} Let  $f : \sX \to S$ be  a smooth projective family of K3 surfaces over an algebraically closed field $k$ of characteristic 0, with $S$ smooth of dimension $n$. Let $\eta$ be the generic point of $S$, $X_{\eta}$ 
 the generic fiber  of $f$ and let $X_s =f^{-1}(s)$,  for every closed point  $s \in S$. Then the following conditions are equivalent.\par
 (i) $\CH_0(X_{\eta})_{\Q} = A_0( X_{\eta})\simeq \Q$.\par
(ii) There is a distinguished cycle $\sC \in A^2(\sX)=\CH^2(\sX)_{\Q}$ such that, for every $\sZ \in A^2(\sX)$,  $\sZ \in \Q[\sC]$, modulo vertical cycles, and
$$\sZ \vert X_s \in \Q[c_{X_s}]$$
for every closed fiber $X_s$.
 \end{thm}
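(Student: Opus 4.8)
The plan is to repackage the exact sequence (5.5) and the colimit (5.6) into a single isomorphism and then read both implications off it. Passing to the colimit over dense opens $U\subseteq S$, the images of the maps $i_*$ assemble into the subgroup $V\subseteq A^2(\sX)$ of \emph{vertical} cycles (those supported over a proper closed subset of $S$), and by the exactness of (5.5) together with (5.6) restriction to the generic fibre gives an isomorphism
$$ A^2(\sX)/V \;\iso\; A_0(X_{\eta}). $$
As distinguished cycle I would take $\sC=\tfrac1{24}\,c_2(T_{\sX/S})\in A^2(\sX)$, the relative second Chern class: since $T_{\sX/S}$ restricts to the tangent bundle of each fibre and $c_2(X_s)=24\,c_{X_s}$ by Beauville--Voisin, one has $\sC|_{X_s}=c_{X_s}$ for every closed fibre and $\sC|_{X_{\eta}}=c_{X_{\eta}}$, a class of degree $1$. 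Both conditions will be compared through this $\sC$.

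For (ii)$\Rightarrow$(i) the first clause of (ii) says exactly that $A^2(\sX)=\Q\,\sC+V$, i.e. $A^2(\sX)/V$ is spanned by the single class $[\sC]$; under the isomorphism above $A_0(X_{\eta})$ is then the one--dimensional space $\Q\cdot(\sC|_{X_{\eta}})$. As the degree map $A_0(X_{\eta})\to\Q$ is surjective (it already hits $1$ on $c_{X_{\eta}}$), a one--dimensional $A_0(X_{\eta})$ must be isomorphic to $\Q$, which is (i); only the first clause of (ii) is needed here. Conversely, assuming (i), i.e. $A_0(X_{\eta})\simeq\Q=\Q\,c_{X_{\eta}}$, the same isomorphism gives $A^2(\sX)/V=\Q\,[\sC]$, which is the first clause of (ii).

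The substantive point in (i)$\Rightarrow$(ii) is the fibrewise clause $\sZ|_{X_s}\in\Q\,c_{X_s}$ for \emph{every} closed $s$. Given $\sZ\in A^2(\sX)$, set $\lambda=\deg(\sZ|_{X_{\eta}})$, a rational number equal to $\deg(\sZ|_{X_s})$ for all $s$ since the degree is locally constant and $S$ is connected. Then $(\sZ-\lambda\sC)|_{X_{\eta}}=0$, so by (5.6) there is a dense open $U\subseteq S$ with $(\sZ-\lambda\sC)|_{\sX_U}=0$. For a closed point $s$ I would choose a smooth irreducible curve $C\subseteq S$ through $s$ meeting $U$ (a general such curve works, by a Bertini/general--position argument, and is smooth at $s$); its generic point $\eta_C$ then lies in $U$, so restricting the vanishing over $\sX_U$ to the fibre over $\eta_C$ yields $\sZ|_{X_{\eta_C}}=\lambda\,c_{X_{\eta_C}}$. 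Finally I would specialise over the discrete valuation ring $\sO_{C,s}$: the family $\sX_C=\sX\times_S C\to C$ is smooth and proper with generic fibre $X_{\eta_C}$ and special fibre $X_s$, and the specialisation map $\mathrm{sp}\colon A_0(X_{\eta_C})\to A_0(X_s)$ satisfies $\mathrm{sp}(\sW|_{X_{\eta_C}})=\sW|_{X_s}$ for $\sW\in A^2(\sX_C)$ and $\mathrm{sp}(c_{X_{\eta_C}})=c_{X_s}$ (both because these are restrictions of global classes, e.g. $c_{X}=c_2/24$). Applying this to $\sW=\sZ|_{\sX_C}$ gives $\sZ|_{X_s}=\lambda\,c_{X_s}\in\Q\,c_{X_s}$.

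The main obstacle is precisely this last step, of propagating (i) from the generic fibre to every special fibre. The delicate points are (a) arranging, for a given $\sZ$, a curve $C$ through $s$ whose generic point lands in the open set $U$ on which $\sZ-\lambda\sC$ already dies, so that specialisation is only ever taken over a genuine DVR and the high--codimension case is sidestepped; and (b) the compatibility of $\mathrm{sp}$ with restriction of global cycles and with the Beauville--Voisin class, which is what guarantees that $\mathrm{sp}$ carries $\lambda\,c_{X_{\eta_C}}$ to $\lambda\,c_{X_s}$.
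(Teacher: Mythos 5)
Your proof is correct and rests on the same skeleton as the paper's (the localization sequence (5.4), the colimit description (5.5) of $A_0(X_\eta)$, a distinguished cycle restricting to the Beauville--Voisin class, and specialization), but it differs in two substantive and arguably preferable ways. First, your distinguished cycle is $\sC=\tfrac{1}{24}c_2(T_{\sX/S})$, which restricts to $c_{X_s}$ on every fibre for free; the paper instead performs a base change to produce a section $\pi:S\to\sX$ through the Beauville--Voisin points and takes $\sC$ to be its class, which is less canonical and leaves the descent back to the original family implicit. Second, for the fibrewise clause of (i)$\Rightarrow$(ii) the paper applies the localization sequence with $Y=\{s\}$ and invokes a ``specialization map'' $A^2(X_\eta)\to A^2(X_s)$ directly, even though $s$ has codimension $n$ in $S$ and Fulton's specialization is a one-dimensional (DVR) construction; your reduction to a curve $C\subseteq S$ through $s$ meeting the open set where $\sZ-\lambda\sC$ vanishes, followed by honest specialization over $\sO_{C,s}$, fills exactly this gap. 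A further small divergence: your (ii)$\Rightarrow$(i) uses only the first clause of (ii) via the isomorphism $A^2(\sX)/V\simeq A_0(X_\eta)$ and the degree of $\sC|_{X_\eta}$, whereas the paper argues through a closed fibre using the second clause; both are valid. (Minor bookkeeping: what you cite as (5.5) and (5.6) are (5.4) and (5.5) in the paper's numbering.)
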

 \begin{proof}  (i) $\Longrightarrow$ (ii). By performing a base change we may assume that there exists a section $\pi : S \to \sX$ of $f$ such that $\pi(s)$ represents $c_{X_s}$ in $A^2(X_s)$, for every $s \in S$.
 Let $\sC$ be the class in $A^2(\sX)$ of $(p_2)_*(\Gamma_{\pi})$, where $\Gamma_{\pi} \subset A_{dim S} ( S \times \sX)$ and $p_2 : S \times \sX \to \sX$. For every closed point $s \in S$ the exact sequence in (5.4), with $Y =\{s\}$  and $U =S -Y$ yields
 $$ \CD A_n(X_s)@>{i_*}>>A^2 ( \sX )@>{j^*}>>A^2( \sX_U)@>>>O \\ \endCD $$ 
The specialization map  $\sigma : A^2(X_U)\to A^2(\bar \sX)$  induces \par
\noindent $\sigma:  A^2(X_{\eta} )\to A^2(X_s)$. Then $i^!(\sC) =c_{X_s} \in A^2(X_s)$. Let $\alpha$ be a generator of $A_0(X_{\eta})$ and let $ a \cdot \alpha$, with $ a\in \Q$,  be the class of $j^*(\sC)$ in $A_0(X_{\eta})$ under the map in (5.4). Then $\sigma(a \cdot \alpha) = i^!(\sC)=c_{X_s}$, i.e $\sigma(\alpha) =1/a c_{X_s}$.  Let $\sZ \in A^2(\sX)$;  then $i^!(\sZ)=\sZ\vert X_s \in A_0(X_s)$ and  $\sZ\vert X_s = \sigma(Z_{\eta})$, where $Z_{\eta} = b \cdot \alpha$ is the class of $j^*(\sZ)$ in $A_0(X_{\eta})$. Therefore $\sigma(Z_{ \eta}) = b\cdot \sigma(\alpha) =(b/a)c_{X_s}$ 
that  proves 
$$\sZ\vert X_s \in \Q[c_{X_s}].$$
 We also  have $j^*(\sZ - (b/a)\sC) =0 $ in $A^2(X_{\eta})$, hence $(\sZ - (b/a)\sC) = i_*(\beta_s)$ with $\beta_s \in A_n(X_s)$. Here $ A_n(X_s) = 0$ for  $n >2$.\par
 \noindent  (ii)  $\Longrightarrow$ (i). Let $\alpha \in A_0( X_{\eta})_0$ and let $\sZ \in A^2( \sX) $ be such that $j^*(\sZ) = \alpha \in A_0(X_{\eta})$. Let $\sZ = m \sC + \beta$, with $m \in \Q$ and  $j^*(\beta)=0$.
  Then $i^!(\sZ) =mc_{X_s}$ in $A_0(X_s)$, for every   closed fibre  $X_s$.  $\sigma(\alpha) $ is a   0 -cycle  of degree 0 in $A_0(X_s)$, under the specialization map  $\sigma : A_0(X_{\eta} )\to A_0(X_s)$, because 
 $\deg(\alpha) =0$. Therefore we get $m=0$, $\sZ = \beta$ in $A^2(\sX)$ and $j^*(\sZ) =\alpha =0$.
\end{proof}
The following Corollary of Theorem 5.6 shows tha condition (i) in Theorem 5.6 holds true, for $g=3,4,5$, i.e.  In the cases  where the projective model in $\P^g$ of a general K3 surface of genus $g$ is a complete intersection of $g-2$ hypersurfaces.
\begin{cor} Let $f : \sX \to S$ be the family of polarized K3 surfaces of genus $g$, with $g =3,4,5$. Then  $A_0(X_{\eta}) \simeq \Q$, with $X_{\eta}$ the generic fibre of $f$.\end{cor}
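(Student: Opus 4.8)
The plan is to realise $X_\eta$ as the generic member of the universal complete intersection and to read off $A_0(X_\eta)$ from the Chow ring of the total space, which for these three genera is a tower of projective bundles over $\P^g$. For $g=3,4,5$ the general polarised K3 surface of genus $g$ is, respectively, a quartic in $\P^3$, a $(2,3)$ complete intersection in $\P^4$, and a $(2,2,2)$ complete intersection in $\P^5$; in each case $X_\eta$ is cut out by $r=g-2$ forms of prescribed degrees. I would let $\bar B$ be the space of defining equations, a product of projective spaces $\P(H^0(\P^g,\sO(d_i)))$, take $S$ to be the dense open locus of smooth automorphism-free members, and set $\sX\subset\P^g\times\bar B$ equal to the universal complete intersection. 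The key structural point is that the first projection $p_1\colon\sX\to\P^g$ is a tower of projective bundles: over each $x\in\P^g$ the forms of degree $d_i$ vanishing at $x$ form a hyperplane in the $i$-th factor of $\bar B$. Hence $\sX$ is smooth and $\CH^*(\sX)$ is a free $\CH^*(\P^g)$-module; in codimension two it is spanned by $h^2$, $h\,\xi_i$ and $\xi_i\xi_j$, where $h=p_1^*H_{\P^g}$ and $\xi_i=p_2^*H_{\bar B,i}$ is pulled back from the $i$-th projective factor of the base.

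First I would push these generators to the generic fibre. The localization exact sequence, together with the identification $\varinjlim_{U\subset S}\CH^2(\sX_U)\simeq\CH^2(X_\eta)$, shows that the restriction $A^2(\sX)\to A^2(X_\eta)$ is surjective, so $A_0(X_\eta)=A^2(X_\eta)$ is generated by the images of $h^2,h\xi_i,\xi_i\xi_j$. Now each $\xi_i$ is pulled back from $\bar B$ along $p_2$, so its restriction to the fibre $X_\eta=p_2^{-1}(\eta)$ is the pullback of the class $H_{\bar B,i}|_\eta\in\CH^1(\Spec k(\bar B))=0$; thus $\xi_i|_{X_\eta}=0$ and every generator involving some $\xi_i$ vanishes. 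The only survivor is $h^2|_{X_\eta}=H_{X_\eta}^2$, the square of the polarisation, a $0$-cycle of degree $2g-2\neq0$. Therefore $A_0(X_\eta)=\Q\cdot H_{X_\eta}^2$ is one dimensional and the degree map identifies it with $\Q$, which is condition (i) of Theorem 5.6.

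The step I expect to be the main obstacle is the comparison of base fields, since the computation lives over $k(\bar B)$ while the corollary concerns the generic fibre over the moduli space $S_g$, with $k(S_g)\subset k(\bar B)$. The forgetful map $\bar B\to S_g$ has as fibres the orbits of $PGL_{g+1}$ (the choice of projective frame, recorded by a basis of the $(g+1)$-dimensional space $H^0(X_\eta,L)$) together with the linear groups acting on the chosen forms. Because $H^0(X_\eta,L)$ is a genuine $k(S_g)$-vector space, the associated frame torsor is split and these fibres are rational varieties with a $k(S_g)$-rational point; equivalently $k(\bar B)/k(S_g)$ is generated by a rational variety with a rational point. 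I would then invoke the split injectivity of $\CH_0$ under such an extension to obtain an injection $A_0(X_\eta)_0\hookrightarrow A_0(X_{\eta,\bar B})_0$; as the right-hand side vanishes by the computation above, $A_0(X_\eta)_0=0$ and $A_0(X_\eta)\simeq\Q$ over the moduli field as well. The point to verify carefully is precisely this base-change injectivity, which is where the genericity of the family is genuinely used.
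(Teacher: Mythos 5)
Your proposal is correct and rests on the same structural fact as the paper's proof, namely that the universal complete intersection is an iterated projective bundle over $\P^g$ (for $g=3$ the paper uses a single $\P^{34}$-bundle over $\P^3$; for $g=4$ it first fixes the quadric and takes a projective bundle over $Q\simeq\P^1\times\P^1$, and for $g=5$ it uses the incidence variety over the Grassmannian $\Gr(3,H^0(\sO_{\P^5}(2)))$, whereas you use a product of projective spaces of forms in all three cases --- both choices dominate the moduli space and work equally well). The differences lie in how the conclusion is extracted. The paper stops at the observation that $A^2(\sX)$, hence each $A^2(\sX_U)$ and hence the colimit $A_0(X_\eta)$, is a finite dimensional $\Q$-vector space, and then asserts $A_0(X_\eta)\simeq\Q$; your version is sharper, since you exhibit explicit generators $h^2$, $h\xi_i$, $\xi_i\xi_j$ of $A^2(\sX)$ over $\CH^*(\P^g)$ and note that every class pulled back from the parameter space restricts to zero on the generic fibre, leaving only $\Q\cdot H^2_{X_\eta}$, which the degree map (degree $2g-2\neq 0$) identifies with $\Q$. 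This closes a small logical step the paper leaves implicit. Your third paragraph addresses a point the paper does not discuss at all: its base $S$ is the space of defining equations rather than the moduli space $S_g$ of Question 5.2, so one must descend from $k(\bar B)$ to $k(S_g)$. Your worry there is harmless: with $\Q$-coefficients the map $A_0(X_K)\to A_0(X_L)$ is injective for an \emph{arbitrary} field extension $L/K$ (reduce to finite extensions, where a norm argument applies, and to purely transcendental ones, where specialization at a rational point splits the map), so you do not need the frame torsor to be split. In short, your argument is a more careful and complete rendering of the paper's.
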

\begin{proof}  Let  $g=3$. The universal family of K3 surfaces  of genus $g =3$ coincides with the family of quartic surfaces in $\P^3$.   Let $f: \sX \to  |\sO(4)|=S$ be the family of all quartic surfaces in $\P^3_{\C}$. Then $\sX$ is a projective  $\P^r$-bundle over $\P^3$, with  $r+1 =35$. As such $\sX$ has a finite dimensional motive, because its motive is isomorphic to the direct sum of   copies of the motive of $\P^3$. Also the Chow ring of $\sX$ is a finitely generated module over $A(\P^3)$ and hence it is a finite dimensional $\Q$- vector space. Therefore  $A^2(\sX_{U})$, where $U$ is a Zariski open subset of $S$, is a finite dimensional $\Q$-vector space and hence
$$ A_0(X_{\eta})=  \lim_{U \subset S} A^2(\sX_U) \simeq   \Q$$
Let $g=4$. Then the generic K3 of genus $g$ is the intersection of a smooth quadric $Q$ and a cubic in $\P^4$. Let $S = | \sO_Q(3)| $ and let $f : \sX \to S$. Then $\sX$ is a projective bundle over $Q$, with $\Q \simeq \P^1 \times \P^1$. Therefore, as in the previous case, the Chow ring of $\sX$ is a finitely generated module over $A(\P^1 \times \P^1)$ and hence it is a finite dimensional $\Q$- vector space. It follows that $A^2(\sX_U)$, for $U$ open in $S$ is a finite dimensional $\Q$ vector space and $A_0(X_{\eta}) \simeq \Q$.\par
\noindent $g =5$. The generic K3  is  the intersection of 3 quadrics in $\P^5$ and hence it is the base locus of  a 3-dimensional linear subspace of $| \sO_{\P^5}(2) |$. Let  $\sX $  be  the incidence variety  $Z \subset \Gr(3,H^0(\sO_{\P^5}(2)) \times \P^5$ given by couples $(P,x)$, where $x\in \P^5$ belongs to the intersection of the quadrics parametrized by $P\in \Gr(3,H^0(\sO_{\P^5}(2))$. 
Let  $S = \Gr(3,H^0(\sO_{\P^5}(2)$  and let   $f : Z \to S$ be the map induced by the first projection.  For every $ s \in S$ the fibre $f^{-1}(s) \subset Z$  is the K3 surface of all points in $\P^5$ lying on the 3 quadrics    parametrized by $s$. All Chow groups $A^k(\Gr(3,H^0(\sO_{\P^5}(2))$ of the Grassmanian bundle $\Gr(3,H^0(\sO_{\P^5}(2))$ are   isomorphic to a finite direct sum of Chow groups of $\P^5$ (see [Fu 14.6.5])  and hence they are  finite dimensional $\Q$-vector space. Also the Chow groups of the incidence sub variety $Z \subset  \Gr(3,H^0(\sO_{\P^5}(2)) \times \P^5$ are finite dimensional $\Q$-vector spaces. Therefore $A^2(\sX)$ and $A^2(\sX_{U})$, with $U$ a open subset of $S$, are finite dimensional $\Q$-vector spaces, so that, as in the previous cases, we get $A_0(X_{\eta} \simeq \Q$.
\end{proof}
\begin{rk} If $X$ is a general polarized K3 surface of genus $g$ with $g >5$ then the projective model of $X$ is not a complete intersection in  $\P^g$. However S.Mukai  proved, in [Mu 1]  and [Mu 2], that, for $6 \le g\le 10$, and also for $g =12,13, 18 ,20$,  $X$ is still a complete intersection with respect to a homogeneous vector bundle in a $g$-dimensional Grassmanian.  So an argument  similar to the one  used in the proof of the Corollary above may be used also in this cases. The following result takes care of the case $g=6$ \end{rk}
\begin {cor}  Let $f : \sX \to S$ be the universal family of polarized K3 surfaces of genus $g$, with $g =6$. Then  $A_0(X_{\eta}) \simeq \Q$, with $X_{\eta}$ the generic fibre of $f$.\end{cor}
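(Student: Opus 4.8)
The plan is to reproduce the mechanism of Corollary 5.7, replacing the complete-intersection models used for $g\le 5$ by the Grassmannian model of a genus-$6$ K3 surface. By Mukai [Mu 1] a general polarized K3 surface of genus $6$ is a quadric section $X=V\cap Q$ of the quintic del Pezzo threefold $V=\Gr(2,5)\cap\P^6\subset\P^9$, the three-dimensional linear section of the Pl\"ucker-embedded Grassmannian; indeed $\deg X=2\cdot\deg V=10=2g-2$, and since $K_V=\sO_V(-2)$ adjunction gives $K_X=\sO_X$, so $X$ is a K3 surface. Because the smooth quintic del Pezzo threefold is unique up to projective equivalence, I would fix $V$ once and for all and let only the quadric vary. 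Set $S=\P(H^0(V,\sO_V(2)))$, form the incidence variety
$$\sX=\{(s,x)\in S\times V:\ x\in Q_s\},$$
and let $f:\sX\to S$ be the first projection, whose fibre over $s$ is $X_s=V\cap Q_s$. A dimension count, $\dim S-\dim\Aut V=22-3=19=\dim\sK_6$, shows that this family dominates the moduli space, so that its generic fibre $X_\eta$ is the object relevant to Theorem 5.6.

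First I would prove that $A^2(\sX)$ is a finite-dimensional $\Q$-vector space. Consider the second projection $p_2:\sX\to V$. Since $\sO_V(2)$ is globally generated, for each $x\in V$ the evaluation $H^0(V,\sO_V(2))\to\C$ at $x$ is surjective, so the fibre $p_2^{-1}(x)$ is a hyperplane in $S$; hence $p_2$ realizes $\sX$ as a projective bundle over $V$. By the projective bundle formula $A^*(\sX)\cong\bigoplus_i A^{*-i}(V)$ is a finite direct sum of copies of $A^*(V)$, so it suffices to know that $A^*(V)$ is finite-dimensional over $\Q$. This is the one genuinely new input: $V=V_5$ is a rational Fano threefold with $b_1=b_3=b_5=0$ and all its cohomology algebraic of Tate type; concretely $V$ carries a full exceptional collection of length four, so that its Chow motive is $\un\oplus\L\oplus\L^2\oplus\L^3$ and $A^i(V)\cong\Q$ for $0\le i\le 3$. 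Consequently $A^*(\sX)$, and in particular $A^2(\sX)$, is finite-dimensional.

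The conclusion then follows exactly as in Corollary 5.7. For every Zariski open $U\subset S$ the exact sequence (5.4) presents $A^2(\sX_U)$ as a quotient of $A^2(\sX)$, hence as a finite-dimensional $\Q$-vector space of bounded dimension; passing to the limit and invoking (5.5),
$$A_0(X_\eta)=\varinjlim_{U\subset S}A^2(\sX_U)$$
is a finite-dimensional $\Q$-vector space, and therefore $A_0(X_\eta)\simeq\Q$.

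The routine parts are the projective-bundle computation and the limit argument, both already carried out for $g\le 5$. The main obstacle is the geometric identification underlying the whole reduction: one must invoke Mukai's classification to see that the generic genus-$6$ K3 is a quadric section of the unique quintic del Pezzo threefold, and then establish that this threefold has finite-dimensional rational Chow groups. The latter replaces the ``bundle over $\P^n$'' input of Corollary 5.7; once $V_5$ is known to have a Tate motive the remaining steps are formal, and the same strategy should apply to the further Mukai cases $7\le g\le 10$ and $g=12,13,18,20$, as anticipated in Remark 5.8.
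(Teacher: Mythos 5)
Your argument is correct and runs on the same engine as the paper's: build an incidence variety $\sX$ over a parameter space $S$, exhibit $\sX$ as a projective bundle over an ambient variety with finite-dimensional rational Chow groups, and then use the localization sequence (5.4) and the colimit (5.5) to conclude that $A_0(X_\eta)$ is a finite-dimensional $\Q$-vector space, hence $\simeq\Q$ (the final step being the same one the paper takes in Corollary 5.7). The genuine difference is in the choice of family. The paper keeps the full Grassmannian $G=\Gr(2,5)\subset\P^9$ and lets \emph{both} the three hyperplanes and the quadric vary, forming the incidence variety inside $G\times|3\sO_G(1)\oplus\sO_G(2)|$; the only input it then needs is that $A^*(\Gr(2,5))$ is a finite-dimensional $\Q$-vector space, which is immediate from Schubert calculus (or [Fu, 14.6.5], as in the $g=5$ case). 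You instead invoke the projective uniqueness of the quintic del Pezzo threefold $V_5=\Gr(2,5)\cap\P^6$ to freeze the linear sections and vary only the quadric, which forces you to supply the additional (true, but less elementary) fact that $V_5$ has a Tate motive, e.g.\ via its full exceptional collection, so that $A^i(V_5)\simeq\Q$. What your version buys is a cleaner geometric picture --- a $22$-dimensional parameter space whose dimension matches $\dim\sK_6+\dim\Aut V_5$, and a template that transfers verbatim to the higher-genus Mukai models of Remark 5.8, where the ambient homogeneous space again has a Tate motive; what the paper's version buys is economy of inputs, since it never needs to know anything about $V_5$ beyond its description as a linear section. Both establish the statement.
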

\begin{proof} A  general polarized K3 surface of genus 6 can be obtained as a complete intersection in the Grassmanian $\Gr(2,5)$ of 2-dimensional subspaces in a fixed 5-dimensional vector space.  $G=\Gr(2,5)$ is embedded into $\P^9$ by Pl\"ucker coordinates and has dimension 6 and degree 5. The intersection in $\P^9$ of $G$ with  3  hyperplanes $H_1,H_2,H_3$ is a Fano 3-fold $F_5$ of index 2 and degree 5. The isomorphism class of $F_5$ does not depend on the choice of the 3 hyperplanes. A smooth complete intersection of $F_5$ with a quadratic hypersurface is a K3 surface of genus $g$. Let $\sV$ be the projective bundle $| 3\sO_G(1) \oplus \sO_G(2)|$ over $G$ and let $\sX$ be the incidence variety $Z \subset G \times I\sO_G(1) \oplus \sO_G(2)|$, given by couples $(x,P)$ where $x \in G$ belongs to the intersection of the hyperplanes and the quadratic hypersurfaces corresponding to $P \in I 3\sO_G(1) \oplus \sO_G(2)|$. Let $f : \sX \to G$ be the map induced by the first projection. The Chow ring $A(G)$ is a finite dimensional  $\Q$-vector space. 
Let $U \subset G$ be a Zariski open subset such that the projective bundle $\sV$ is trivial over $U$. Then $A^2(\sX_U) \simeq A^2(U \times (\sV)_U)$ is a finite dimensional $\Q$-vector space.
Therefore $A_0(X_{\eta}) \simeq \Q$, where  $X_{\eta} $ is the generic fibre of $f  : \sX \to G$.
  \end{proof}
The following lemma appears in [GG,6.1]. 
\begin{lm} Let $f : \sX \to C$ be a smooth projective family of surfaces over an algebraically  closed field of characteristic 0, with $\dim C=1$. Let $\eta$ be the generic point of $C$ and let $s $ be a closed point. Let $\bar K$ be the algebraic closure of $K =k(\eta)$ and let $X_{\bar K} =X_{\eta} \times_K  k(\eta)$, where $X_{\eta}$ is the generic fibre of $f$. Then if $A_0(X_{\bar K}) \simeq \Q$ also $A_0(X_s) \simeq \Q$, where $X_s$ is the fibre of $f$ over the closed point $s$.\end{lm}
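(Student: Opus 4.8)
The plan is to run a specialization argument for the Chow group of zero-cycles attached to the family $f$ at the closed point $s$, using precisely the specialization map $\sigma$ set up in Section 5, and to show that under the hypothesis $\sigma$ is at the same time surjective and zero on cycles of degree $0$. First I would localise at $s$: set $R=\sO_{C,s}$, a discrete valuation ring with fraction field $K=k(C)$ and residue field $k$, which is algebraically closed since $s$ is a closed point. The base change $\sX_R=\sX\times_C\Spec R$ is regular and projective over $\Spec R$, with generic fibre $X_\eta=X_K$ and special fibre $X_s$. Taking $Y=\{s\}$, $U=C-\{s\}$ and $n=1$ in the exact sequence (5.4), so that $\bar\sX=X_s$, and passing to the colimit (5.5), I obtain the specialization map $\sigma:A_0(X_\eta)\to A^2(X_s)=A_0(X_s)$ characterised by $\sigma\circ j^*=i^!$; by [Fu, 20.3] it commutes with the degree map, hence restricts to $\sigma_0:A_0(X_\eta)_0\to A_0(X_s)_0$. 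All Chow groups are taken with $\Q$-coefficients, which is what will let me divide by integers below.

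Next I would establish that $\sigma$ is surjective. Since $k$ is algebraically closed, $A_0(X_s)$ is generated by the classes of closed points $x\in X_s$; and since $f$ is smooth, through each such $x$ one can choose an irreducible curve $Z\subset\sX$ dominating $C$ — for instance a general complete-intersection curve through $x$, which by a Bertini argument in characteristic $0$ is irreducible, is not contained in the divisor $X_s$, and therefore maps non-constantly, hence dominantly, to $C$. Its restriction $j^*[Z]=[Z\cap\sX_U]\in A^2(\sX_U)$ satisfies $\sigma(j^*[Z])=i^![Z]=[Z\cdot X_s]$, an effective zero-cycle meeting $X_s$ in a finite set containing $x$; choosing $Z$ in general position so that it meets $X_s$ only at $x$ and clearing the resulting multiplicity with $\Q$-coefficients produces the class $[x]$. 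As the $[x]$ generate $A_0(X_s)$, the map $\sigma$, and hence $\sigma_0$, is surjective.

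Then I would prove that $\sigma_0$ is the zero map, which is where the hypothesis enters. Let $\beta\in A_0(X_\eta)_0$. Because $A_0$ commutes with filtered colimits of fields, $A_0(X_{\bar K})_0=\varinjlim_{K'/K}A_0(X_{K'})_0$ over the finite subextensions $K'\subset\bar K$; by hypothesis this colimit vanishes, so $\beta$ already dies over some finite $K'/K$, i.e.\ $\beta_{K'}=0$ in $A_0(X_{K'})_0$. Let $C'$ be the normalisation of $C$ in $K'$ and $s'$ a point above $s$; then $X_{s'}\cong X_s$, since both residue fields equal the algebraically closed field $k$. The specialization map $\sigma'$ of the family $\sX_{C'}\to C'$ obeys the base-change relation $\sigma'(\beta_{K'})=e\cdot\sigma(\beta)$ under the identification $A_0(X_{s'})=A_0(X_s)$, where $e$ is the ramification index of $C'/C$ at $s'$ ([Fu, 20.3]). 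Since $\beta_{K'}=0$ the left-hand side vanishes, and as $e$ is a positive integer, invertible in $\Q$, I conclude $\sigma(\beta)=0$. Thus $\sigma_0=0$.

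Combining the surjectivity of $\sigma_0$ with $\sigma_0=0$ forces $A_0(X_s)_0=0$, so the degree map $A_0(X_s)\to\Q$ is an isomorphism, proving the lemma. The step I expect to be the real obstacle is reconciling the two base fields — the non-closed field $K$ of the generic fibre with the algebraically closed $\bar K$ of the geometric generic fibre — which I handle through the finite covers $C'\to C$ together with the fact that the ramification indices become units once one works with $\Q$-coefficients; the surjectivity/multisection step is technically delicate but standard, and relies essentially on the smoothness of $f$.
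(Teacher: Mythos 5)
Your overall strategy---specialize zero-cycles directly, showing the specialization map at $s$ is surjective yet kills degree-zero classes---is legitimate and genuinely different from the paper's proof, which instead invokes Bloch--Srinivas to get $v(\Delta_{X_{\bar K}})=0$, hence $t_2(X_{\bar K})=0$ and finite dimensionality of $h(X_{\bar K})$, and then specializes finite dimensionality together with the algebraicity of $H^2$ via smooth proper base change in $\ell$-adic cohomology. Your second half (spreading a degree-zero class out to a finite extension $K'/K$ where it already vanishes, and comparing the two specializations up to the ramification index, which is harmless with $\Q$-coefficients) is sound.

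The gap is in the surjectivity step. A complete-intersection curve $Z=\sX\cap H_1\cap H_2$ through $x$ is a multisection of some degree $e=\deg(Z/C)\ge 2$, and $Z\cdot X_s=X_s\cap H_1\cap H_2$ is a zero-cycle of degree equal to $\deg X_s\ge 2$; requiring $Z$ to meet $X_s$ only at $x$ is not a general-position condition but a total-ramification condition, which Bertini cannot deliver---for general $H_i$ through $x$ the extra intersection points are present and distinct from $x$. So what you actually put in the image of $\sigma$ is $[x]+[y_1]+\cdots+[y_{m}]$ with uncontrolled $y_j$, and since on a surface with $p_g\ne 0$ distinct points are in general not rationally equivalent, you cannot discard these terms without assuming the conclusion. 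Hence surjectivity of $A_0(X_\eta)\to A_0(X_s)$ over the non-closed field $K$ is not established (and is doubtful as stated). The repair is to work from the start with the colimit over finite extensions, i.e.\ with $X_{\bar K}$, exactly as the paper's proof sets up $\sigma$ "passing to colimits over finite extensions of $R$'': since $f$ is smooth, Hensel's lemma lifts any $x\in X_s(k)$ to a section of $\sX$ over the henselization of $\sO_{C,s}$, hence to a $K'$-point of $X_{K'}$ for some finite $K'/K$ unramified at a point $s'$ above $s$ with residue field $k$, and that point specializes exactly to $[x]$. This gives surjectivity of $\bar\sigma\colon A_0(X_{\bar K})\to A_0(X_s)$, after which the hypothesis $A_0(X_{\bar K})_0=0$ yields $A_0(X_s)_0=0$ at once and renders most of your second paragraph unnecessary.
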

\begin{proof} Let $R $ be the completion of the local ring of $C$ at $s$. Then, passing to colimits over finite extensions of $R$,  we have a specialization homomorphism over algebraically closed fields of characteristic 0 \par
\noindent  $ \sigma : A^i(X_{\bar K} )\to A^i(X_s)$ and a commutative diagram
$$ \CD  A^i(X_{\bar K} )@>{\sigma} >> A^i(X_s) \\
@V{cl}VV       @V{cl}VV    \\
H^{2i}(X_{\bar K}, \Q_l(i))@>{\simeq}>>H^{2i}(X_s, \Q_l(i)) \endCD$$
where $H^*$ is $l$-adic cohomology,  see [Fu, 20.3.5].  Now assume $A_0(X_{\bar K}) \simeq \Q$. Then, by the results in [BS] , $v(\Delta_{X_{\bar K}})=0$. Therefore  $t_2(X_{\bar K})=0$, because the identity map on $t_2(X_{\bar K})$ is 0 in 
$ \sM_{rat}(\bar K)$. It follows that  the motive $h(X_{\bar K})$ is finite dimensional. Also the group $H^2(X_{\bar K}, \Q_l(1))$ is algebraic. The finite dimensionality of the motive $h(X_{\bar K})$ in $\sM_{rat} (\bar K)$ implies, via the specialization map $\sigma$, the finite dimensionality of $h(X_s)$ in $\sM_{rat}(k)$, see [Ped 1, 4.3]. From the commutative diagram we also get the algebraicity
of $H^2(X_s), \Q_l(1))$.  Therefore $t_2(X_s)=0$ which implies $A_0(X_s) \simeq \Q$
\end{proof}
\begin {cor} Let $f : \sX \to \P^n_k$ be a smooth projective family of surfaces over an algebraically closed field $k$ of characteristic 0. Let $X_{\eta}$ be the generic fibre of $f$, which is a surface over $K = k(t_1, \cdots ,t_n)$, and let $X_P$ be the fibre over a closed point $P \in \P^n_k$. Assume that $A_0(X_{\bar K}) \simeq \Q$. Then also $A_0(X_P)\simeq \Q$.\end{cor}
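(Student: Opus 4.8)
The plan is to induct on $n$, using the preceding Lemma (the case of a one--dimensional base) as the base case and cutting the dimension of the base down by one at each step. For $n=1$ the assertion is exactly that Lemma applied to $f:\sX\to\P^1_k$, with $\eta$ the generic point and $P$ a closed point, since there $K=k(t_1)$ and the hypothesis reads $A_0(X_{\bar K})\simeq\Q$.

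For $n\ge 2$ I would choose a hyperplane $H\cong\P^{n-1}_k$ through the closed point $P$ and restrict the family, putting $\sX_H=\sX\times_{\P^n_k}H$ and $f_H:\sX_H\to H$. As smoothness and properness are stable under base change, $f_H$ is again a smooth projective family of surfaces, now over $\P^{n-1}_k$, and $P$ is a closed point of $H$ with fibre $X_P$. The generic point $\eta_H$ of $H$ has residue field $k(H)$ of transcendence degree $n-1$ over $k$, and $X_{\eta_H}$ is the generic fibre of $f_H$. Hence, once I know that $A_0\bigl((X_{\eta_H})_{\overline{k(H)}}\bigr)\simeq\Q$, the inductive hypothesis applied to $f_H:\sX_H\to\P^{n-1}_k$ gives $A_0(X_P)\simeq\Q$, as wanted.

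It then remains to verify $A_0\bigl((X_{\eta_H})_{\overline{k(H)}}\bigr)\simeq\Q$. Here I would use that $\eta_H$ is a codimension--one point of the smooth variety $\P^n_k$, so that $R=\sO_{\P^n,\eta_H}$ is a discrete valuation ring with fraction field $K=k(\P^n)$ and residue field $k(H)$. Base--changing $f$ along $\Spec R\to\P^n_k$ produces a smooth proper family of surfaces over $R$ with generic fibre $X_K$ and closed fibre $X_{\eta_H}$, and the argument in the proof of the preceding Lemma applies to this DVR: passing to the colimit over finite extensions yields a specialization homomorphism $\sigma:A^i(X_{\bar K})\to A^i\bigl((X_{\eta_H})_{\overline{k(H)}}\bigr)$ over algebraically closed fields, compatible with the cycle class maps into $l$--adic cohomology. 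Since $A_0(X_{\bar K})\simeq\Q$ forces $t_2(X_{\bar K})=0$ and hence the finite dimensionality of $h(X_{\bar K})$, the propagation of finite dimensionality along $\sigma$ (see [Ped 1, 4.3]) together with the isomorphism on $H^2$ gives $t_2\bigl((X_{\eta_H})_{\overline{k(H)}}\bigr)=0$, i.e. $A_0\bigl((X_{\eta_H})_{\overline{k(H)}}\bigr)\simeq\Q$, which closes the induction.

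The only delicate step is this last codimension--one specialization, because the residue field $k(H)$ is not algebraically closed, whereas the preceding Lemma is phrased with an algebraically closed residue field; I expect this to be the main obstacle. I would handle it exactly as in that Lemma: replace $k(H)$ by $\overline{k(H)}$ from the outset, dominating $R$ by a valuation ring whose fraction field contains $\bar K$ and whose residue field is $\overline{k(H)}$, so that all fields in sight are algebraically closed; then the finite--dimensionality transfer and the algebraicity of $H^2$ go through unchanged. The reduction to a hyperplane is otherwise formal, relying only on the fact that a hyperplane section of a smooth projective family of surfaces is again such a family and that any hyperplane through $P$ has generic point of transcendence degree $n-1$ over $k$.
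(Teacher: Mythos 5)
Your argument is correct, and at the one genuinely delicate point it is more careful than the paper's own proof. The paper reduces to the one-dimensional Lemma in a single step: it restricts the family to the one-dimensional local ring $R_n=A_{\sM}/(t_1,\dots,t_{n-1})A_{\sM}$ of the line $t_1=\cdots=t_{n-1}=0$ through $P$, whose closed fibre is $X_P$ and whose generic fibre is the surface lying over the generic point of that line, and then asserts $A_0((\tilde X_n)_{\bar K_n})\simeq A_0(X_{\bar K})\simeq \Q$ on the grounds that base change between algebraically closed fields is an isomorphism on $A_0$. That justification does not literally apply: the fibre over the generic point of the line and the fibre over the generic point of $\P^n_k$ sit over different points of the base and are related by specialization, not by a field extension, so a specialization argument is needed precisely there. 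Your induction on $n$ by hyperplane sections supplies exactly this missing step: at each stage you specialize from $\eta$ to the codimension-one point $\eta_H$ along the discrete valuation ring $\sO_{\P^n,\eta_H}$, dominate it by a valuation ring with algebraically closed fraction and residue fields as in the Lemma's own proof, and rerun the Lemma's mechanism ($A_0\simeq\Q$ forces $t_2=0$, hence finite dimensionality of the motive and algebraicity of $H^2$, both of which pass to the special fibre), then invoke the inductive hypothesis for $f_H:\sX_H\to\P^{n-1}_k$. The substance of the two arguments is the same chain of implications, but your route makes the codimension-one specialization explicit, whereas the paper's shortcut --- cutting straight down to a line --- is only valid once one grants that same specialization step (applied $n-1$ times, or once over a valuation ring dominating $A_{\sM}$ itself). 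So your proposal is a complete and, if anything, tighter version of the intended proof.
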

\begin {proof} Let $\sM$ be the maximal ideal of $A = k[t_1,\cdots,t_n]$, corresponding to the closed point $P$, and let $A_{\sM} \simeq  k[t_1,\cdots,t_n]_{(t_1,\cdots,t_n)}$ be the local ring of $\P^n_k$ at $P$. Let $S =\Spec A_{\sM}$ and let $\tilde f:  \tilde \sX \to S$  be the induced fibration. Let $R_n = A_{\sM}/(t_1,\cdots,t_{n-1})A_{\sM}$ .  Then $R_n$ is a local ring of dimension 1 with quotient field  $k(t_n)$ and residue field $k$. Let $S_n =\Spec R_n$ and let
$$\CD  \tilde \sX_n@>>> \tilde\sX \\
@V{\tilde f_n}VV    @V{\tilde f}VV \\
S_n@>>> S \endCD $$
be the base change. The generic fibre  $(\tilde X_n)_{\eta_n}$ of $\tilde f_n$ is a surface over $k(t_n)$ and the closed fibre the surface $ X_p$ over $k$. Let $\bar K_n$ be the algebraic closure of the field $k(t_n)$, Then $\bar K_n \subset \bar K$ and $A_0(\tilde X_n)_{\bar K_n}) \simeq A_0 (\tilde X_{\bar K})\simeq \Q$, because the base change from an algebraically closed field to a larger algebraically closed field induces an isomorphism on Chow groups with $\Q$-coefficients. From lemma   we get $A_0(X_P)\simeq \Q$.\end{proof}
\begin {rk} Let $f : \sX \to S$ be one of the families of polarized K3 surfaces of genus $g$ considered in Corollary  5.7 and in Corollary 5.9,  where $S$ is either isomorphic or birational to $\P^n_{\C}$, for some $n$.  Let  $X_{\eta}$ be the generic fibre of $f$ and let $K$ be the algebraic closure of $k(\eta) =k(S)$. Then, from Corollary 5.11, we get $A_0(X_{K}) \ne \Q$, because  $A_0(X_s) \ne \Q$ for a closed fibre $X_s$, which is a K3 surface over $\C$.
 \end{rk}
 \begin {prop}  Let $f : \sX \to C$, with  $\dim C=1$,  be a smooth projective family of  K3  surfaces over an algebraically closed field  $k$. Then the following conditions are equivalent:\par
 (i)$A_0(X_{\eta} ) \simeq \Q$, where $\eta$ is the generic point of $C$;\par
 (ii)  $A^2(\sX)$ is a finitely dimensional $\Q$-vector space.\ r
  \end{prop}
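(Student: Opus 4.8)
The plan is to read both conditions off the localization sequence (5.4)--(5.5). Taking $Y$ in (5.4) to be a finite set of closed points of $C$ and passing to the limit over $U=C\setminus Y$, and using $\dim C=1$ (so that the lower index $n$ equals $1$), I obtain an exact sequence
$$\bigoplus_{s\in C}\CH_1(X_s)\xrightarrow{\,i_*\,}A^2(\sX)\xrightarrow{\,j^*\,}A_0(X_{\eta})\longrightarrow 0,$$
so that $A_0(X_{\eta})\cong A^2(\sX)/N$, where $N=\Im(i_*)$ is the subspace of fibral (``vertical'') $1$-cycles, generated by the images of the groups $\CH_1(X_s)=NS(X_s)_{\Q}$. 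The whole proposition then becomes a statement about the interaction between $A^2(\sX)$ and this vertical subspace $N$.

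For the implication (ii)$\Rightarrow$(i), if $A^2(\sX)$ is finite dimensional then so is its quotient $A_0(X_{\eta})$. Since $X_{\eta}$ is a K3 surface we may write $A_0(X_{\eta})=\Q\,c_{X_{\eta}}\oplus T(X_{\eta})$, and it remains to see that a finite dimensional Albanese kernel of a surface with $p_g\ne 0$ must vanish; this is exactly the rigidity underlying Mumford's theorem (cf. [BS]), giving $T(X_{\eta})=0$ and hence $A_0(X_{\eta})\cong\Q$.

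For (i)$\Rightarrow$(ii) I would invoke Theorem 5.6: if $A_0(X_{\eta})\cong\Q$ there is a distinguished cycle $\sC$ with $A^2(\sX)=\Q\,\sC+N$ modulo vertical cycles, so the finite dimensionality of $A^2(\sX)$ is equivalent to that of $N$. Each fibre being a K3 surface, $NS(X_s)_{\Q}$ has rank at most $20$, and the classes extending to a relative divisor on $\sX$ span, along the connected base $C$, a subspace controlled by $\operatorname{Jac}(C)$ together with the finitely many fibres where the N\'eron--Severi rank jumps. When $C$ is rational this is transparent and recovers the cases treated in Corollaries 5.7 and 5.9.

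The main obstacle is precisely the finiteness of the vertical subspace $N$. A relative divisor $\Gamma$ on $\sX$ produces, for $\xi\in\operatorname{Jac}(C)$, the fibral classes $\Gamma\cdot f^*(\xi)$, and these a priori trace out an infinite dimensional image of $\operatorname{Jac}(C)$ inside $A^2(\sX)$; controlling them is the heart of the matter. I expect this to be handled by a Bloch--Srinivas decomposition of the diagonal for the generic fibre $X_{\eta}/k(C)$, which under hypothesis (i) forces the transcendental part $t_2$ of the fibre to be trivial and thereby collapses the $\operatorname{Jac}(C)$-contribution, leaving $N$ spanned by the finitely many N\'eron--Severi classes of the fibres. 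Dually, in (ii)$\Rightarrow$(i) it is this same rigidity that upgrades finite dimensionality to an isomorphism with $\Q$; establishing it cleanly, uniformly in the genus of $C$, is where the real work lies.
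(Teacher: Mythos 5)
Your reduction via the localization sequence is the same as the paper's, and your treatment of (ii)$\Rightarrow$(i) matches it: finite dimensionality of $A^2(\sX)$ passes to the quotient $A_0(X_{\eta})$, and one then concludes $A_0(X_{\eta})\simeq\Q$ (the paper is in fact terser than you are here, offering no Mumford-type justification for the last step). The problem is the direction (i)$\Rightarrow$(ii), where you yourself concede that ``establishing it cleanly\dots is where the real work lies.'' That is a genuine gap, not a detail: as you correctly observe, the vertical subspace $N$ receives classes $\Gamma\cdot f^*(\xi)$ for $\xi$ ranging over $\Pic^0(C)_{\Q}$, which is infinite dimensional when $C$ has positive genus, and nothing in your argument controls them. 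The paper closes this gap with a tool you do not use: Guletskii's equivalence $\sM_{\rat}(\eta)\simeq \colim_{U\subset C}\sM_{\rat}(U)$ ([Gu, Lemma 4]), obtained by spreading out cycles. This realizes the isomorphism $A^2(X_{\eta})=\Hom(\un,h(X_{\eta}))\simeq\Q$ at a \emph{finite} stage, i.e.\ it produces an actual nonempty open $U=C\setminus\{P_1,\dots,P_n\}$ with $A^2(\sX_U)\simeq\Q$ (in particular the whole $\operatorname{Jac}(C)$-contribution already dies in $A^2(\sX_U)$). The localization sequence then exhibits $A^2(\sX)$ as an extension of $\Q$ by the images of finitely many groups $A^1(X_{P_i})=NS(X_{P_i})_{\Q}$, each of finite rank, and finite dimensionality follows. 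Note that a colimit of surjections having one-dimensional limit does not by itself force any finite stage to be one-dimensional, so some such spreading-out statement at the level of motives (or of the decomposition of the relative diagonal) is genuinely needed.

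Your proposed repair --- a Bloch--Srinivas decomposition of the diagonal of the generic fibre forcing $t_2$ of the fibre to vanish --- would not work, and in fact contradicts the paper's own Remark 5.12. Bloch--Srinivas requires smallness of $CH_0$ over a universal domain, i.e.\ over $\bar K=\overline{k(C)}$, whereas hypothesis (i) is only a statement over $k(C)$ itself; for the families of Corollaries 5.7 and 5.9 one has $A_0(X_{\eta})\simeq\Q$ while $A_0(X_{\bar K})\ne\Q$ (the closed fibres are complex K3 surfaces, so $t_2(X_{\bar K})\ne 0$). So the hypothesis is strictly weaker than what your sketch assumes, and the argument must exploit rational equivalence over $k(C)$ directly, which is exactly what the colimit equivalence does.
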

 \begin {proof} (i)$\Longrightarrow$ (ii).  For every open subset $U =C - \{P_1,\cdots P_n\}$ the localization sequence in (5.4) gives
\begin{equation} \bigoplus_{1 \le i \le  n} A^1(X_{P_i}) \to A^2( \sX) \to A^2(\sX_U)\to 0\end{equation}
where the groups $A^1(X_{P_i}) =NS(X_{P_i})_{\Q}$ are finitely generated vector spaces. The category of Chow motives over the generic point $\eta$ of $C$ equals  the colimit of categories of Chow motives over non-empty  open subsets $U$ in the base curve $C$, i.e there is a functor F 
$$\CD  F : \sM_{rat} (\eta)@>{\simeq}>>  \colim_{U \subset C} \sM_{rat}(U) \\ \endCD $$
 which is an equivalence of categories, see [Gu, Lemma 4].  The functor $F$ is obtained by taking spreads of algebraic cycles and localizations sequences for Chow groups. If $M=(X,p)$ is  a Chow motive over $k(\eta)$ then $F(M) =(Y',p')$, where $Y'$ and $p'$ are spreads of $Y$ and $p$ over some open subset $U \subset C$. Also, by eventually shrinking $U$, we may assume that $p'$ is a projector, so that $M'=( Y',p')$ is a Chow motive over $k(U)$. On morphisms $F$ is defined in a similar way, because morphisms in a category of Chow motives are correspondences. We have
$$A^2(X_{\eta}) = \Hom_{\sM_{rat}(k(\eta)}( \un, h(X_{\eta})) = A_0 ( \Spec k(\eta) \times X_{\eta}) \simeq$$
$$\simeq \Q \simeq \Hom(_{\sM_{rat}(k(\eta)}(\Spec k(\eta) \times \Spec k(\eta)).$$
Therefore there exists an open subset $U \subset C$ such that $A^2(\sX_U) = \ Hom_{\sM_{rat}(k(U)}(\Spec k(U) \times \Spec k(U))$, i.e $A^2(\sX_U) \simeq \Q$.
From the localization sequence we get  that $A^2(\sX)$ is a finite dimensional  $\Q$-vector space.\par
(ii) $\Longrightarrow$ (i). From the exact sequence in (5.14) $A^2(\sX_U)$ is a finite dimensional $\Q$-vector space, for every open   $U \subset C$. From (5.5) we get
$\lim_{U \subset C} A^2(\sX_U)= A^2(X_{\eta})$ and hence $A_0(X_{\eta} \simeq \Q$.
 \end{proof} 
\begin{rk}Let $f : \sX \to C$  be a smooth projective family of surfaces on $k =\bar k$ with $C$ a smooth curve. In  [GG] it has been proved that, if  $A_0(\sX)\simeq \Q$, then $A^2(\sX)$ is a finite dimensional $\Q$- vector space and $\sX$ has a finite dimensional motive,  which lies in the subcategory of $\sM_{rat}(k)$ generated by the motives of abelian varieties. This is the case if $\sX$ is a Fano 3-fold.\end{rk}
\begin{ex} (Huybrechts) Let  $\sX \to \P^1$ be a general Lefschetz pencil of quartics in $\P^3_{k}$, with $k$ an algebraically closed field of characteristic 0, such that $\sX =Bl_C(\P^3)$ where $C$ is a complete intersection curve of  genus $g >0$  and $A^2(\sX) \simeq \Q \oplus \Pic C$.  Let $X_{\eta}$ be the generic fibre. Then $A_0(X_{\eta}) \ne \Q$ because  $A^2(\sX)$ is not a finite dimensional $\Q$ -vector space.\end{ex}

\end{document}